\documentclass{amsart}
\usepackage{hyperref}
\newtheorem{theorem}{Theorem}[section]
\newtheorem{proposition}{Proposition}[section]
\newtheorem{lemma}[theorem]{Lemma}
\newtheorem{corollary}[theorem]{Corollary}

\theoremstyle{definition}

\newtheorem{example}[theorem]{Example}

\theoremstyle{remark}
\newtheorem{remark}[theorem]{Remark}
\numberwithin{equation}{section}

\newcommand{\cB}{{\mathcal B}}

\newcommand{\cD}{{\mathcal D}}
\newcommand{\bCE}{{\mathcal E}}

\newcommand{\bCG}{{\mathcal G}}
\newcommand{\cH}{{\mathcal H}}

\newcommand{\cK}{{\mathcal K}}

\newcommand{\cM}{{\mathcal M}}

\newcommand{\cN}{{\mathcal N}}
\newcommand{\bCO}{{\mathcal O}}

\newcommand{\bCS}{{\mathcal S}}

\newcommand{\cV}{{\mathcal V}}

\newcommand{\bT}{{\mathbb{T}}}

\newcommand{\bD}{{\mathbb{D}}}
\newcommand{\bC}{{\mathbb{C}}}
\newcommand{\bS}{{\mathbb{S}}}
\newcommand{\bR}{{\mathbb{R}}}
\newcommand{\bN}{{\mathbb{N}}}

\newcommand{\cBm}[1]{\left[\begin{smallmatrix} #1
		\end{smallmatrix}\right]}

\newcommand{\bTheta}{{\boldsymbol{\bTheta}}}

\usepackage{color, bm}
\begin{document}

\thanks{}

\title[Dilation]{Dynamic Nevanlinna-Pick Theory, Covariance Dilations, and Non-commutative Varieties}

\author{Sourav Ghosh}
\address{Department of Mathematics, Indian Institute of Science Education and Research Pune, Maharashtra 411008}
\email{sourav.ghosh@students.iiserpune.ac.in}
\author{Haripada Sau}
\address{Department of Mathematics, Indian Institute of Science Education and Research Pune, Maharashtra 411008}
\email{hsau@iiserpune.ac.in}
\thanks{The author Sau was supported by the Anusandhan National Research Foundation (ANRF), Government of India, under the Mathematical Research Impact-Centric Support (MATRICS) scheme (Grant No. ANRF/ARGM/2025/000573/MTR)}


\subjclass[2020]{Primary 47A20, 47A57; Secondary 47B33, 47A25, 30E05, 46E22.}
\keywords{Nevanlinna-Pick interpolation, Blaschke product, composition operator, operator dilation, commutant lifting, non-commutative variety, complete spectral set.}

\begin{abstract} 
We establish a dynamic generalization of Nevanlinna-Pick interpolation for functions invariant under the action of a finite Blaschke product $f$, reducing global bounded holomorphic extension on orbit spaces to structured block-kernel positivity. Furthermore, we demonstrate that membership in $H^\infty(\mathbb{D})$ is universally detectable via deformations by any finite Blaschke product. These results are proved via an underlying operator-theoretic lifting framework for covariant operator pairs in the spirit of Sarason. Finally, in the setting of non-commutative function theory, we show that despite the universal validity of the matrix-valued von Neumann inequality over the free polydisk, Arveson-type complete spectral set representations break down for non-commutative inner varieties.
\end{abstract}

\maketitle


\section{Introduction}
The Nevanlinna-Pick interpolation theorem stands as a foundational landmark in function theory and operator theory, characterizing bounded holomorphic functions on the unit disk $\bD$. Let $H^\infty(\bD)$ denote the Banach algebra of bounded analytic functions on $\bD$ equipped with the supremum norm $\|\cdot\|_\infty$. \textit{Given a subset $\bCS\subset\bD$, the classical Pick theory establishes that a function $g: \bCS \to \bC$ admits a bounded holomorphic extension to whole $\bD$ with $\|g\|_\infty \le M$ if and only if the Pick kernel 
\[
(z,w) \mapsto \frac{M^2 - g(z)\overline{g(w)}}{1-z\overline{w}}
\] 
is positive semi-definite on $\bCS \times \bCS$, where $k(z,w) := (1-z\overline{w})^{-1}$ is the Szeg\"o kernel.} While this classical paradigm relies fundamentally on the identity map $f(z)=z$, modern operator theory increasingly demands extension and interpolation criteria adapted to non-linear symmetries, fixed-point spaces of composition operators $C_f: g \mapsto g \circ f$, and the discrete iteration theory of inner functions.

For any non-identity holomorphic self-map $f$ of $\bD$, the classical Denjoy--Wolff theorem asserts the existence of a unique point $p \in \overline{\bD}$ toward which forward iterations $f^{\circ n}$ converge; see e.g., \cite[Theorem 2.51]{MR1237406}. Hereafter, we adopt the notation
$
f^{\circ n}
$ for $n$-fold composition of $f$ with the convention that $f^{\circ 0}$ is the identity map. Depending on the geometric nature of the Denjoy--Wolff point $p$--whether it lies in the interior $\bD$ (elliptic) or on the boundary $\partial\bD$ (parabolic or hyperbolic)--one can analyze the non-emptiness and structure of the fixed-point space $\ker(I - C_f)$ of the composition operator $C_f$; see e.g., \cite[Theorem 2.63]{MR1237406}. However, determining whether a given solution to $g \circ f = g$ possesses sufficient regularity to belong to a prescribed function space, such as $H^\infty(\bD)$, is a notoriously delicate problem; see \cite[Chapter 7]{MR1237406}.

This delicacy is further compounded when considering a \emph{partial solution}: a function $g: \bCS \to \bC$ defined merely on an $f$-invariant subset $\bCS \subset \bD$ without any presupposed holomorphic structure, satisfying $g(f(z)) = g(z)$ for all $z \in \bCS$. Our first main result settles this problem by establishing an exact criterion for when such a partial fixed point admits a global bounded holomorphic extension to all of $\bD$.

\subsection{A dynamic interpolation problem}
In this paper, we bridge classical Pick interpolation, operator theory, and holomorphic dynamics by introducing a dynamic Nevanlinna-Pick framework. We address the extension of partially defined functions that respect the orbit structure of a (finite) Blaschke product
$$
f(z)= e^{i\theta}\prod_{j=1}^N\frac{z-a_j}{1-\overline{a_j}z}.
$$
Given an invariant subset $\mathcal{S} \subset \mathbb{D}$ under the forward orbit
$$
\mathcal{O}_f(z):=\{f^{\circ n}(z):n\geq 0\},
$$ a function $g: \mathcal{S} \to \mathbb{C}$ satisfying the dynamical invariance $g(f(z)) = g(z)$ represents a partial eigenfunction of the composition operator 
$$
C_f:g\mapsto g\circ f.
$$ We establish that a function $g:\bCS\to \bC$ (holomorphic or not) admits a bounded holomorphic extension to all of $\mathbb{D}$ if and only if a structured matrix-valued function $\Delta: \mathcal{S} \times \mathcal{S} \to \mathcal{B}(\mathbb{C}^{n_z}, \mathbb{C}^{n_w})$--indexed by the orbit periodicities $n_z, n_w$--is positive semi-definite. This reduces a global $H^\infty(\mathbb{D})$ extension problem on quotient dynamics to a local block-positivity condition over periodic trajectories.
\medskip 

\noindent
\textbf{Theorem A.} [See Theorem \ref{T:BoundExt}] \textit{Let $\bCS\subset \bD$ and $g:\bCS\to\bC$ be a function for which there is a Blaschke function $f$ such that $\bCS\supset \{\mathcal{O}_f(z):z\in \bCS\}$ and 
$g(f(z)) = g(z)$ for all $z\in\bCS$. Then the following are equivalent:
\begin{enumerate}
    \item There is a bounded holomorphic function $G$ on $\bD$ such that $G|_S=g$;
    \item There exists $M>0$ such that the matrix-valued function $\Delta:(z,w)\to \cB(\bC^{n_z},\bC^{n_w})$ on $\bCS\times \bCS$ defined by
$$
\Delta(z,w) = \begin{bmatrix}
    \dfrac{M^2\alpha^2}{1-f^{\circ j}(z)\overline{f^{\circ l}(w)}}-\dfrac{g(z)\overline{g(w)}}{1-f^{\circ j+1}(z)\overline{f^{\circ l+1}(w)}}
\end{bmatrix}_{j,l=(0,0)}^{(n_z-1, n_w-1)}
$$ is a positive semi-definite kernel where 
$\alpha^2=(1+|f(0)|)(1-|f(0)|)^{-1}.$
\end{enumerate}
}\medskip

\noindent
Here, the matrix blocks explicitly encode orbit iterations scaled by the distortion factor $\alpha^2 = (1+\vert{}f(0)\vert{})(1-\vert{}f(0)\vert{})^{-1}$. 

Beyond orbit-invariant extension, our framework reveals a rather unexpected flexibility in classical function theory. While standard Nevanlinna-Pick theory rigidly fixes the identity map to test $H^\infty(\mathbb{D})$ membership, we show that $H^\infty(\mathbb{D})$ is universally detectable through the lens of \textit{any} finite Blaschke product $f$. More precisely, 
\medskip 

\noindent
\textbf{Theorem B.} [See Theorem \ref{T:Bdd-holomorphic}]
\textit{A given function $g:\bD\to\bC$ is bounded and holomorphic on $\bD$ if and only if there exists $M>0$ and a Blaschke function $f$ such that the function $\Delta_f:\bD\times \bD \to \bC$ defined by
$$
\Delta_f(z,w) =M^2\alpha^2k(z,w)-g(z)k(f(z),f(w))\overline{g(w)}
$$is a positive semi-definite kernel where $k(z,w) = (1-z\overline{w})^{-1}$ is the Szegö kernel and $\alpha^2 = (1+|f(0)|)(1-|f(0)|)^{-1}$.}

While the forward direction of Theorem \textbf{B} directly aligns with the classical Pick criterion with $f(z)=z$, the power of this result lies in its converse: positive semi-definiteness under an arbitrary finite Blaschke transformation $f$, balanced by the factor $\alpha^2$, is sufficient to imply global holomorphy and boundedness across the entire disk.

The following theorem plays a key role in obtaining the interpolation/extension Theorems \textbf{A} and \textbf{B} above. This may be viewed as an analogue of the classical result that states that the only bounded operator that commutes with the Hardy shift $T_z$ are the Toeplitz operators $M_\varphi$ with $\varphi\in H^\infty$.
\medskip

\noindent
\textbf{Theorem C.} [See Theorem \ref{T:f-commutant of vector-shift}] \textit{
Let $f$ be a Blaschke function and $A$ be a bounded operator on $H^2$ such that $f(M_z)A=AM_z$. Then $A=M_\varphi C_f$ for some $\varphi$ in $H^\infty$, the algebra of bounded analytic functions on $\bD$. Moreover, $\varphi$ can be chosen such that $$\|\varphi\|_{\infty}\le \|A\|\sqrt{\|f'\|_{\infty}}.$$}

\subsection{Lifting of operators satisfying covariance relations}
At the heart of our approach to proving Theorems \ref{T:BoundExt} and \ref{T:Bdd-holomorphic} is the operator-theoretic paradigm pioneered by D.~Sarason in his foundational work \cite{MR208383}. Sarason demonstrated that classical Nevanlinna-Pick interpolation can be reinterpreted through the algebra of compressed shift operators and the Commutant Lifting Theorem. Following Sarason's technique, our dynamic extension Theorem \textbf{A} and $H^\infty$ characterization Theorem \textbf{B} do not rely on purely function-theoretic estimates; rather, they are derived as concrete function-theoretic shadows of an underlying operator lifting and dilation theory.
 
Generalizing a large spectrum of dilation theorems by various authors including that of Sz.-Nagy \cite{MR236755} and And\^o \cite{MR155193}, we establish a lifting theorem for operator tuples satisfying an inner covariance relation of the form
\begin{align}\label{InnerCovariance}
T_1T_2=T_2f(T_1)
\end{align}
where $f$ is a Blaschke function. The intertwining relation \eqref{InnerCovariance} will be referred to as a covariance relation or an inner covariance relation when $f$ is an inner function. Given an operator pair $(X_1,X_2)$ acting on a Hilbert space $\cH$, an operator pair $(Y_1,Y_2)$ acting on $\cK\supset\cH$ is called a
\begin{enumerate}
    \item \textit{lift} of $(X_1,X_2)$ if $\cH$ is invariant under $(Y_1^*,Y_2^*)$ and $(Y_1^*,Y_2^*)|_\cH=(X_1^*,X_2^*)$;
    \item \textit{dilation} of $(X_1,X_2)$ if $\cH$ is semi-invariant under $(Y_1,Y_2)$ and $P_\cH(Y_1,Y_2)|_\cH=(X_1,X_2)$.
\end{enumerate}
We show that  
\medskip

\noindent
\textbf{Theorem D.} [See Theorem \ref{T:Lift1} and Theorem \ref{T:UniDil}] \textit{If $(T_1,T_2)$ acting on Hilbert space $\cH$ satisfies the inner covariance relation \eqref{InnerCovariance}, then 
\begin{enumerate}
    \item there is an isometric lift of $(T_1,T_2)$ satisfying the same inner covariance relation \eqref{InnerCovariance}; and also 
\item there is a unitary dilation of $(T_1,T_2)$ satisfying the same inner covariance relation. 
\end{enumerate}}
En route, we prove various lifting theorems that are interesting in their own right. 
\begin{remark}
    In the evolution of dilation theory, following Sz.-Nagy's seminal 1953 dilation theorem for a single contraction, And\^o (1963) famously extended the framework to commuting contractive pairs. Decades later, Sebesty\'en \cite{MR1176485} established that anti-commuting pairs ($T_1T_2=-T_2T_1$) likewise admit anti-commuting isometric dilations, a direction generalized in the 2019 paper \cite{MR3894905} to $q$-commuting pairs ($T_1T_2=qT_2T_1$ for $q\in\bT$). Upon completion of this manuscript, the authors became aware that a result equivalent to part (2) of Theorem \textbf{D} was independently established earlier in 2012 by Davidson and Katsoulis \cite{MR2929016}. We wish to explicitly acknowledge this overlap while underscoring the methodological distinctions between the two treatments. Whereas Davidson and Katsoulis approach covariant relations through $C^*$-algebraic and operator-algebraic machinery, our proof relies strictly on classical operator-theoretic techniques built around the Sz.-Nagy--Foias intertwining lifting theorem \cite{MR2760647}. This constructive perspective provides explicit operator representations that are essential for deriving the dynamic Nevanlinna-Pick criteria in Theorem \textbf{A} and the flexible test for $H^\infty(\bD)$ membership in Theorem \textbf{B}.
\end{remark}

\subsection{Failure of Arveson's dilation theorem in free analysis}
In classical operator theory, W.~Arveson's foundational work \cite{MR394232} established a profound equivalence between operator dilations and complete spectral sets as follows. Say that a compact set $K \subset \bC^d$ serves as a \textit{complete spectral set} for a commuting operator $d$-tuple $\bm T$, if the Taylor joint spectrum $\sigma_T(\bm T)\subset K$ and 
$$
\|f(T)\| \leq \sup_{z\in K}\|f(z)\|
$$for every matrix valued rational function $f$. When the inequality above holds for every scalar-valued rational functions, we say that $K$ is a spectral set for $\bm T$. Given a compact set $K$ in $\bC^d$, we say that a commuting $d$-tuple $\bm T$ of operators acting on $\cH$ admits a \textit{normal $K$-boundary dilation} $\bm N$ if $\bm N$ is a $d$-tuple of commuting normal operators acting on $\cK\supset\cH$ with $\sigma_T(\bm N)\subset bK$, the Shilov boundary of $K$ corresponding to the algebra of rational functions with poles away from $K$, and
$$
f(\bm T) = P_\cH f(\bm N)|_\cH
$$for every scalar-valued rational function $f$ with poles away from $K$.

Arveson showed that \textit{a commuting operator tuple $\bm T$ has a compact set $K$ in $\bC^d$ as a complete spectral set if and only if $\bm T$ admits a normal $K$-boundary dilation.}

In the modern framework of free analysis and non-commutative (NC) function theory, this correspondence between dilation and complete spectral set fails, as the following theorem states.
Indeed, while the free polydisk $\bD^{d,\rm NC}$, as defined below, acts as a universal complete spectral set for all contractive operator tuples, imposing specific non-commutative algebraic relations exhibits an unexpected phenomenon. For commutative operator pairs, algebraic constraints on the operators naturally restrict the spectrum and dilation domain to the associated algebraic variety; the reader is referred to the papers for work on this theme \cite{MR4693667, MR3584680, MR3834796}. However, when an operator pair $(T_1, T_2)$ satisfies a non-commutative inner covariance relation of the form $T_1 T_2 = T_2 f(T_1)$, the matricial structure fails to inherit this rigidity. More precisely,
\medskip

\noindent
\textbf{Theorem E.} [See Theorem \ref{T:CompleteSpec}] \textit{Every contractive operator tuple $$\bm T:= (T_1,T_2,\dots, T_d)$$ acting on $\cH$ for $d\geq1$, has the free polydisk
$$
\bD^{d,\rm NC}:=\bigcup_{n\geq1}\{(X_1,X_2,\dots,X_d)\in M_n(\bC)^d: \|X_j\|\leq 1 \mbox{ for each }j=1,2,\dots,d\}
$$as a complete spectral set, i.e., for every $N\times N$ matrix-valued NC polynomial $P$,
$$
\|P(\bm T)\|_{\cH\otimes\bC^N} \leq \sup_{\bm X\in\bD^{d,\rm NC}}\|P(\bm X)\|.
$$However, despite the lifting/dilation Theorem \textbf{C}, it is not necessary for a contractive pair $(T_1,T_2)$ satisfying an inner covariance relation $T_1T_2=T_2f(T_1)$ to have the following inner variety
$$
\cV_f^{\rm NC}:=\bigcup_{n\geq1}\{(X_1,X_2)\in M_n(\bC)^2: X_1X_2=X_2f(X_1) \mbox{ and } \|X_j\|\leq 1 \mbox{ for each }j\}
$$as a complete spectral set.}

In Example \ref{E:NotComplete}, we show that the non-commutative variety $\cV_f^{\rm NC}$ introduced above need not be a complete spectral set for a unitary pair satisfying the automorphic covariance relation $U_1U_2=e^{2\pi i \theta}U_2U_1$ where $\theta\in\bR\setminus\mathbb Q$.

\section{Operators satisfying covariance relations}\label{S:GenDiscussion}
We begin with a concrete example of matrix pair satisfying a covariance relation. 
\begin{example}
Let \(A,B\in M_n(\bC)\) and \(f:\mathbb{C}\to \mathbb{C}\) be such that \(f(A)\) is defined. A complex number \(\lambda\) is said to be a \textit{periodic point} of \(f\) if \(f^{\circ n}(\lambda)=\lambda\) for some \(n\in \bN\), and the smallest such \(n\) is called the \textit{period} of \(\lambda\). Let \(\lambda\) be a periodic point of \(f\) of period \(n\), then
\[A=\begin{bmatrix}
\lambda &  &  &  \\
 & f(\lambda) &  &  \\
 &  & \ddots &  \\
 &  & & f^{n-1}(\lambda)
\end{bmatrix}, \quad B = 
\begin{bmatrix}
 & 1 & & &  & \\
 & & 1&  & & \\
 &  &  &\ddots  & \\
 &  & & &1  \\
1 &  &  &  & 
\end{bmatrix} 
\]
satisfy \(f(A)B=BA\) and \(AB=Bf^{\circ (n-1)}(A)\). 
\end{example}
We now provide a large class of examples acting on reproducing kernel Hilbert spaces.
\begin{example}
Let $k$ be any reproducing kernel on $\bD$ where the multiplication operator $M_z:g\mapsto zg(z)$ is bounded. Suppose $k$ is also such that the composition operator $C_f:g\mapsto g\circ f$ is also bounded. Concrete Examples of such kernels include the Szeg\"o kernel and the Bergman kernel - see \cite[Chapter 3]{MR1397026}. It is straightforward to check that the covariance relation 
$$
f(T_z)C_f=C_fT_z
$$is always satisfied.
\end{example}
For a Hilbert space $\bCE$, a bounded operator that commutes with the unilateral shift $M_z$ on $H^2(\bCE)$ must be of the form $M_\varphi$ for a symbol $\varphi$ in $H^\infty(\cB(\bCE))$. This result plays an important role in operator theory, operator algebras, and interpolation theory. In this paper, we establish an analogue of this theorem by characterizing the structure of bounded operators that satisfy a left/right covariance relation with the shift operator. We shall find an application of this result later in this paper. 
\begin{theorem}\label{T:f-commutant of vector-shift}
Let $f$ be a Blaschke function and $A$ be a bounded operator on $H^2(\mathcal{E})$ such that 
$$
f(M_z)A=AM_z.
$$Then $A=M_{\varphi}C_f$ for some $\varphi\in H^{\infty}(\mathcal{B(E)})$ with $\|\varphi\|_\infty \leq \|A\|\sqrt{\|f'\|_{\infty,\bT}}$.
\end{theorem}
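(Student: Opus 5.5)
The plan is to read off the symbol $\varphi$ from the action of $A$ on constants, and then bound $\varphi$ pointwise on $\bT$ by a change of variables along the $N$-to-$1$ boundary covering $f|_{\bT}$.

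\emph{Step 1: the symbol.} Regard $\bCE\subset H^2(\bCE)$ as the constants. For $e\in\bCE$, $Ae\in H^2(\bCE)$ depends linearly and boundedly on $e$. Define $\varphi(z)e:=(Ae)(z)$, so that $\varphi$ is a $\cB(\bCE)$-valued holomorphic function on $\bD$. Iterating the covariance relation gives $AM_z^n=f(M_z)^nA=M_{f^n}A$. Hence for every $n\ge 0$ and $e\in\bCE$,
$$
A(z^ne)=f^n\,\varphi e=\varphi\,(z^n\circ f)\,e=M_\varphi C_f(z^ne).
$$
Therefore $A=M_\varphi C_f$ on $\bCE$-valued polynomials $p$. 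It remains to show $\varphi\in H^\infty(\cB(\bCE))$ with the stated bound. Once that is done, $M_\varphi C_f$ is bounded, because $C_f$ is bounded on $H^2(\bCE)$ for $f$ a finite Blaschke product. The identity $A=M_\varphi C_f$ then extends from polynomials to all of $H^2(\bCE)$ by density and continuity.

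\emph{Step 2: transfer to the boundary.} Fix $e\in\bCE$ and a scalar polynomial $p$. The relation $\|A(pe)\|\le\|A\|\,\|p\|_2\|e\|$ becomes
$$
\int_{\bT}\|\varphi(\zeta)e\|^2\,|p(f(\zeta))|^2\,dm(\zeta)\le \|A\|^2\|e\|^2\int_{\bT}|p|^2\,dm .
$$
Here $\varphi e\in H^2(\bCE)$ has boundary values a.e. A finite Blaschke product of degree $N$ restricts to a smooth $N$-sheeted covering $\bT\to\bT$ with $|f'|>0$ on $\bT$. Locally, $dm(w)=|f'(\zeta)|\,dm(\zeta)$ for $w=f(\zeta)$. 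So the left side equals $\int_{\bT}\Phi_e(w)|p(w)|^2\,dm(w)$, where
$$
\Phi_e(w)=\sum_{f(\zeta)=w}\frac{\|\varphi(\zeta)e\|^2}{|f'(\zeta)|}
$$
is the transfer (Perron--Frobenius) operator applied to $\|\varphi e\|^2$.

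\emph{Step 3: pointwise bound.} I would show that $\int\Phi_e|p|^2\,dm\le C\int|p|^2\,dm$ for all analytic polynomials $p$ forces $\Phi_e\le C$ a.e. on $\bT$. The route is the standard one. First, the inequality extends to all $p\in H^\infty$ by bounded pointwise approximation (Fejér means) and Fatou's lemma. Second, for any bounded measurable $u\ge\epsilon>0$ on $\bT$, the outer function $p$ with $|p|^2=u$ lies in $H^\infty$. Hence $\int(\Phi_e-C)u\,dm\le0$ for all such $u$, which gives $\Phi_e\le C$ a.e.

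Applying this with $C=\|A\|^2\|e\|^2$ and keeping a single term of the sum gives
$$
\|\varphi(\zeta)e\|^2\le\|A\|^2\|e\|^2|f'(\zeta)|\le \|A\|^2\|f'\|_{\infty,\bT}\|e\|^2 \quad\text{a.e. on }\bT.
$$
A common null set can be chosen over a countable dense subset of $e$ (working in a separable reducing setting if needed), and the estimate for general $e$ then follows by continuity in $e$. Since each $\varphi e\in H^2(\bCE)$ has essentially bounded boundary values, $\varphi e\in H^\infty(\bCE)$. By the Poisson representation, $\|\varphi\|_\infty\le\|A\|\sqrt{\|f'\|_{\infty,\bT}}$.

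The main obstacle is Step 3, together with the boundary change of variables in Step 2. One must be careful that $|p\circ f|^2$ only samples $p$ through the covering, which is why the fibre sum $\Phi_e$ appears. The density argument via outer functions is where the real work lies. The operator-valued measurability issue is a secondary technicality.
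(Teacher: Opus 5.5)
Your proof is correct, but it reaches the key norm estimate by a genuinely different route from the paper. Step 1 (reading off $\varphi$ from the constants and getting $A=M_\varphi C_f$ on polynomials) is essentially what the paper does.

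The paper never leaves the disk. Pairing $A(k_{f(w)}\otimes e_1)$ with $k_w\otimes e_2$ gives, for every $w\in\bD$,
\[
\|\varphi(w)\|^2\le\|A\|^2\,h(w),\qquad h(w)=\frac{1-|f(w)|^2}{1-|w|^2}.
\]
The Blaschke structure is used only to see that $h=\sum_j|\psi_j|^2$ is subharmonic and continuous up to $\bT$ with boundary values $|f'|$. The maximum principle then yields $\sup_{\bD}h=\|f'\|_{\infty,\bT}$. This is more elementary: it needs no boundary values of $\varphi$, no covering map, no outer functions and no measurability questions. As the paper's subsequent remark notes, it also isolates $\sup_{\bD}h<\infty$ as the only property of $f$ really needed.

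Your argument instead works on $\bT$. It uses that $f|_{\bT}$ is an $N$-sheeted covering with Jacobian $|f'|$, together with an outer-function density lemma. That costs more machinery, but it gives more:
\begin{itemize}
\item the fibre-sum bound $\sum_{f(\zeta)=w}\|(\varphi e)(\zeta)\|^2/|f'(\zeta)|\le\|A\|^2\|e\|^2$ a.e., which is sharp (in the scalar case it computes $\|M_\varphi C_f\|^2$ exactly);
\item the pointwise estimate $\|(\varphi e)(\zeta)\|^2\le\|A\|^2\|e\|^2|f'(\zeta)|$ a.e., which is the boundary counterpart of the paper's interior bound by $h$.
\end{itemize}

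One simplification: you do not need a common null set over a dense family of vectors, or any separability reduction. For each fixed $e$, the Poisson representation already gives $\|\varphi(z)e\|\le\|A\|\|e\|\sqrt{\|f'\|_{\infty,\bT}}$ at every $z\in\bD$. The supremum over unit vectors $e$ can then be taken at a fixed interior point.
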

\begin{proof}
It is easy to see that $f(M_z)M_{\varphi}C_f=M_{\varphi}C_fM_z$ for any $\varphi \in H^{\infty}(\mathcal{B(E)})$. To prove the converse, observe that  $f(M_z)A=AM_z$ implies $T_{p\circ f}A=AT_p$ for all $p\in \bC[z]$. Define $\varphi:\bD\to \mathcal{B(E)}$ by 
$$
\varphi(z)e=A(1\otimes e)(z).
$$
Hence for every scalar-valued polynomial $p$,
\begin{align*}
    A(p\otimes e)(z)=p(f(z))\varphi(z)e .
\end{align*}By density of $\mathcal{E}$-valued polynomials in $H^2(\mathcal{E})$ and boundedness of $A$, the same identity extends to all $g\in H^2(\mathcal{E})$:
\begin{align}\label{mult-compo}
Ag(z)=\varphi(z)g( f(z))
\quad\mbox{for all } z\in\bD.
\end{align}
By the reproducing property of the kernel $k\otimes I_\bCE$, we have $$\langle \varphi(z)e_1,e_2\rangle=\langle A(1\otimes e_1)(z),e_2\rangle=\langle A(1\otimes e_1),k_z\otimes e_2\rangle.$$ This implies $\varphi(z)$ is bounded for all $z\in \bD$, and $\varphi$ is holomorphic. Once we prove that $\varphi$ is bounded, the conclusion follows immediately. For all $z,w\in\bD$, we compute 
\begin{align*}
k_z(f(w))\langle\varphi(w)e_1,e_2 \rangle _\bCE &= k_z(f(w))\langle e_1,\varphi(w)^*e_2 \rangle _\bCE\\
&=\left\langle k_z(f(w)) e_1,\varphi(w)^*e_2\right\rangle_\bCE\\
&=\left\langle \varphi(w) k_z(f(w)) e_1,e_2\right\rangle_\bCE\\
&=\langle A(k_z\otimes e_1)(w),e_2\rangle_{\bCE}=\langle A(k_z\otimes e_1),k_w\otimes e_2\rangle_{H^2(\bCE)}.    
\end{align*}
In the computation above, we used \eqref{mult-compo} for $g=k_z\otimes e_1$. We now take the modulus both sides to get 

\begin{align*}
    |k_z(f(w))||\langle \varphi(w)(e_1),e_2\rangle|=|\langle A(k_z\otimes e_1),k_w\otimes e_2\rangle|\le \|A\|\|k_z\|\|k_w\|\|e_1\|\|e_2\|.
\end{align*} Putting $z=\overline{f(w)}$ yields $$|\langle\varphi(w)e_1,e_2\rangle|\le \|A\| \sqrt{\dfrac{1-|f(w)|^2}{1-|w|^2}}\|e_1\|\|e_2\|.$$
Let us consider $h: \bD\to [0,\infty)$ defined by
$$
h(w)= \dfrac{1-|f(w)|^2}{1-|w|^2}.
$$ Since $f$ is a Blaschke function, $h$ is subharmonic on $\bD$. Indeed, let $f(w)=e^{i\theta}\prod_{j=1}^{N}\frac{w - a_k}{1 - \overline{a_k} w}$. Then an easy computation shows $$h(w)=\sum_{j=1}^{N}|\psi_j(w)|^2,$$ where $\psi_j(w) = \frac{\sqrt{1-
|a_j|^2}}{1 - \overline{a_j} w} \prod_{k=1}^{j-1} \left( \frac{w - a_k}{1 - \overline{a_k}w} \right)$. Since $h$ is a finite sum of squared absolute values of holomorphic functions, $h$ is subharmonic. Also since $f$ is a Blaschke function,
$$
\lim_{w\to \zeta}h(w)=|f'(\xi)|
$$for every $|\zeta|=1$. For a proof of this fact, see \cite[Theorem 5.2.5]{MR3793610}). Then by maximum principle for subharmonic functions, $\|h\|_\infty=\|f'\|_\infty$. Hence $$|\langle \varphi(w)e_1,e_2\rangle|\le \|A\|\sqrt{\|f'\|_{\infty}}\|e_1\|\|e_2\|.$$ It then implies that for all $w\in\bD$, 
$$
\|\varphi(w)\|=\sup_{\|e_1\|=\|e_2\|=1}|\langle \varphi(w)e_1,e_2\rangle|\le \|A\|\sqrt{\|f'\|_\infty}.
$$ Hence we conclude from \ref{mult-compo} that $A=M_\varphi C_f$.
\end{proof}
\begin{remark}
An alert reader may have noticed from the proof of Theorem \ref{T:f-commutant of vector-shift} that the only use of Blaschke functions is that for such functions, it is guaranteed that
$$
M:=\sup_\bD\frac{1-|f(w)|}{1-|w|}<\infty.
$$It turns out that the only self maps for which this supremum is finite are the Blaschke functions. Indeed, the inequalities $0< 1-|f(w)|\leq M(1-|w|)$ on $\bD$ give
$$
\lim_{|w|\to 1-} |f(w)|=1.
$$It is known (see e.g., \cite[Theorem 3.5.2]{MR3793610}) that the only self maps that satisfies this are the Blaschke functions.

Therefore the proof does not work for functions other than the Blaschke functions. It is natural to wonder to what extent Theorem \ref{T:f-commutant of vector-shift} remains true. We note the somewhat surprising fact that the assertion of Theorem \ref{T:f-commutant of vector-shift} is not necessarily true for any holomorphic self map $f$. For example, let $f(z)=z/2$. Then for any $g\in H^2$ and $|w|\le \frac{1}{2}$, 
$$
|g(w)|=|\langle g,k_w\rangle|\le \frac{\|g\|_{H^2}}{1-|w|^2}\le \frac{4}{3}\|g\|_{H^2}.
$$ Hence the operator $A$ on $H^2$ defined by $$A(g)=\varphi(g\circ f),$$ where $\varphi(z)=\sum_{n=1}^{\infty}\frac{z^n}{n}$, is well-defined. Since $\varphi\in H^2$ and $g\circ f\in H^\infty$ for any $g\in H^2$, it follows that $A$ is well defined. The operator $A$ is bounded as well:
$$\|A(g)\|=\frac{1}{2\pi}\int_{0}^{2\pi}\left|\varphi(e^{i\theta})g\left(\frac{e^{i\theta}}{2}\right)\right|^2d\theta \le \frac{16}{9}\|g\|^2_{H^2}\|\varphi\|_{H^2}.
$$
It is an easy check that $A$ satisfies the left covariance relation $M_fA=AM_z$, yet it cannot be of the form $M_\psi C_f$ for any $\psi\in H^{\infty}$ as any such $\psi$ would satisfy $\psi=A(1)=\varphi\notin H^{\infty}$.
\end{remark}
The analogue of Theorem \ref{T:f-commutant of vector-shift} for right covariance relation is the following. 
\begin{theorem}\label{T:f-commutant of shift'}
 Let $f$ be any holomorphic self map of the unit disk and $A$ be a bounded operator on $H^2(\mathcal{E})$ such that $$
 M_zA=Af(M_z).
 $$Then $AC_f=M_{\varphi}$ for some $\varphi\in H^\infty(\cB(\mathcal E))$. 
\end{theorem}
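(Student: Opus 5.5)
The plan is to reduce the statement to the classical description of the commutant of the shift, which the paper recalled just before Theorem \ref{T:f-commutant of vector-shift}: every bounded operator on $H^2(\bCE)$ commuting with $M_z$ is $M_\varphi$ for some $\varphi\in H^\infty(\cB(\bCE))$. So it suffices to show that $AC_f$ is bounded and commutes with $M_z$.

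\emph{Step 1: the operators involved.} Since $M_z$ on $H^2(\bCE)$ is a pure isometry, the $H^\infty$ functional calculus is available. For $f$ in $H^\infty$ it gives $f(M_z)=M_f$, multiplication by $f$, acting coordinatewise on $\bCE$-valued functions. Next, $C_f$ is bounded on $H^2$ for any holomorphic self-map $f$ of $\bD$, by Littlewood's subordination principle. Hence $C_f\otimes I_\bCE$, which I still denote $C_f$, is bounded on $H^2(\bCE)=H^2\otimes\bCE$. It follows that $AC_f$ is a bounded operator on $H^2(\bCE)$.

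\emph{Step 2: the intertwining identity.} For every $g\in H^2(\bCE)$,
$$
(C_fM_zg)(z)=f(z)\,g(f(z))=(M_fC_fg)(z),
$$
so $C_fM_z=M_fC_f=f(M_z)C_f$. Combining this with the hypothesis $M_zA=Af(M_z)$ gives
$$
M_z(AC_f)=Af(M_z)C_f=AC_fM_z .
$$

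\emph{Step 3: conclusion.} By Step 2, $AC_f$ commutes with the unilateral shift on $H^2(\bCE)$. The commutant theorem then yields $AC_f=M_\varphi$ for some $\varphi\in H^\infty(\cB(\bCE))$. Moreover $\|\varphi\|_\infty=\|AC_f\|\le\|A\|\,\|C_f\|$, and Littlewood's estimate gives $\|C_f\|\le\sqrt{(1+|f(0)|)/(1-|f(0)|)}=\alpha$. If one prefers to avoid quoting the commutant theorem, one can argue directly: define $\varphi(z)e:=(AC_f(1\otimes e))(z)$, extend from polynomials by density as in the proof of Theorem \ref{T:f-commutant of vector-shift}, and bound $\varphi$ by testing against normalized kernels $k_w\otimes e$.

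The only point needing care is Step 1, namely the justification that $f(M_z)=M_f$ for a general, not necessarily Blaschke, self-map $f$, together with the boundedness of $C_f$. Both are standard. Unlike in Theorem \ref{T:f-commutant of vector-shift}, no Blaschke hypothesis is needed here, because boundedness of $\varphi$ comes for free from the commutant theorem rather than from a kernel estimate involving $(1-|f(w)|^2)/(1-|w|^2)$.
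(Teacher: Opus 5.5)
Your proposal is correct and follows the same route as the paper. The paper uses the identity $C_fM_z=f(M_z)C_f$ to show that $AC_f$ commutes with $M_z$, then invokes the classical description of the commutant of the shift on $H^2(\mathcal{E})$. Your added details are correct refinements that the paper leaves implicit: boundedness of $C_f$ via Littlewood subordination, and the resulting bound $\|\varphi\|_\infty\le\|A\|\sqrt{(1+|f(0)|)/(1-|f(0)|)}$.
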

\begin{proof}
    It can be seen by means of an easy computation that for every holomorphic self map $f$ of the unit disk $\bD$, $f(M_z)C_f=C_fM_z$ on $H^2(\mathcal E)$. Thus by multiplying $C_f$ on the right hand side of the given intertwining $M_z A=Af(M_z)$ we see that $AC_f$ commutes with $M_z$ and hence the conclusion follows.
\end{proof}
\begin{remark}
Unlike Theorem~\ref{T:f-commutant of vector-shift} for left covariance relations, Theorem~\ref{T:f-commutant of shift'} is somewhat limited in applicability because the intertwining operator is uniquely determined only on $\operatorname{Ran} C_f$ rather than the entire space. However, if $C_f$ admits a right inverse $R_f$, then $A = M_\varphi R_f$. Note that for $C_f$ to admit a right inverse, it must be surjective. Since the composition operator $C_f$ induced by a non-constant holomorphic self-map is always injective (an application of the Open Mapping Theorem), surjectivity implies that $C_f$ is fully invertible. By \cite[Theorem~5.2.1]{MR2270722}, $C_f$ is invertible if and only if $f$ is an automorphism of $\bD$. Consequently, when $f$ is not an automorphism, a complete characterization of operators satisfying the intertwining relation $M_z A = A M_f$ remains an open problem.
\end{remark}

\section{Lifting of operators satisfying a covariance relation}\label{S:Lifting}

In this section we prove a lifting theorem for operator pairs satisfying an inner covariance relation. This lifting theorem will be used in later sections to obtain the dilation theorem.

A couple of necessary terminologies are in order. We say that a unitary dilation $U$ on $\cK$ of a contraction $T$ acting on $\cH$ is \textit{minimal} if $\cK$ is the smallest reducing subspace for $U$ that contains $\cH$, i.e., $\cK=\bigvee_{n\in \mathbb Z}U^n\cH$. Once a minimal isometric lift $V$ of $T$ is obtained, a minimal unitary dilation $U$ of $T$ is the adjoint of a minimal isometric lift of $V^*$. The unitary operator $U$ is also the same as a minimal unitary extension of $V$. Furthermore, any two minimal isometric lifts or unitary dilations of a contractive operator are unique in an appropriate sense. Proofs of these facts are routine and can be found for example in \cite[Chapter I]{MR2760647}.

We now briefly recall the classical lifting theorem due to Sz.-Nagy--Foias \cite{MR2760647} and Douglas-Muhly-Pearcy \cite{MR236752}.
\begin{theorem}
\label{T:Classical_Iso_ILT}
 Let $T_1$ on $\cH_1$, $T_2$ on $\cH_2$ and $X:\cH_1\to\cH_2$ be contractions such that 
    $$
    T_2X=XT_1.
    $$If $V_1$ on $\mathcal K_{1+}$, and $V_2$ on $\mathcal{K}_{2+}$ are minimal isometric lifts of $T_1$ and $T_2$, respectively, then there is a bounded operator $Y_+:\cK_{1+}\to\cK_{2+}$ such that
    \begin{align}\label{ClassicalILT}
    V_2Y_+=Y_+V_1, \quad  \|Y_+\|=\|X\| \quad\mbox{and}\quad
    Y_+^*|_{\cH_2}=X^*.
    \end{align}
    Moreover, the conclusion remains true if the minimality assumption on the isometric lifts is dropped.
\end{theorem}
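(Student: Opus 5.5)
Since this is the classical Sz.-Nagy--Foias intertwining lifting theorem, the plan is to reduce it to the commutant lifting theorem for a single contraction by the standard $2\times 2$ matrix trick. Normalize first: if $X=0$ take $Y_+=0$; otherwise replace $X$ by $X/\|X\|$, so that $X$ is a contraction and $\|X\|=1$. On $\cH=\cH_1\oplus\cH_2$ set
$$
T=\begin{bmatrix} T_1 & 0\\ 0 & T_2\end{bmatrix},\qquad \tilde X=\begin{bmatrix} 0&0\\ X&0\end{bmatrix}.
$$
Then $T\tilde X=\tilde XT$ is exactly the relation $T_2X=XT_1$, and $\|\tilde X\|=\|X\|$. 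Moreover, $V=V_1\oplus V_2$ on $\cK_{1+}\oplus\cK_{2+}$ is an isometric lift of $T$: the space $\cH$ is co-invariant and $V^*|_\cH=T^*$. If both $V_1$ and $V_2$ are minimal, then so is $V$.

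The core step is commutant lifting for the single contraction $T$. Given $\tilde X$ commuting with $T$, we want $\tilde Y$ commuting with $V$ with $\tilde Y^*|_\cH=\tilde X^*$ and $\|\tilde Y\|=\|\tilde X\|$. I would prove this by Parrott-type one-step extensions. Write $\cK_+=\cH\oplus\mathcal D_T\oplus\mathcal D_T\oplus\cdots$, which is the Schäffer model of the minimal isometric lift. One constructs contractive extensions $\tilde X_n$ on $\cH\oplus\mathcal D_T^{\oplus n}$ inductively. At each stage the commutation relation forces the compression of $\tilde X_{n+1}$ onto the old space, and one must choose a contraction completing a $2\times2$ operator matrix. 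Parrott's lemma (Arsene--Gheondea) guarantees that such a completion exists with norm at most one. A weak-operator limit of the $\tilde X_n$ then gives $\tilde Y$. Equivalently, one can simply cite \cite{MR2760647} for this step.

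After that, recover the off-diagonal corner. Write $\tilde Y=[Y_{ij}]$ relative to $\cK_{1+}\oplus\cK_{2+}$. Since $V$ is diagonal, $V\tilde Y=\tilde YV$ gives $V_2Y_{21}=Y_{21}V_1$. Since $\tilde Y^*|_\cH=\tilde X^*$, compressing to $\cH_2$ gives $Y_{21}^*|_{\cH_2}=X^*$. Set $Y_+=Y_{21}$. Then $\|Y_+\|\le\|\tilde Y\|=\|X\|$. Conversely, $\|Y_+\|\ge\|Y_+^*|_{\cH_2}\|=\|X^*\|$, so equality holds. Finally, undo the normalization by multiplying by $\|X\|$.

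For non-minimal lifts, any isometric lift decomposes as $V_i=W_i\oplus R_i$ on $\cK_{i+}=\cK_{i+}^{\min}\oplus\mathcal R_i$, where $W_i$ is a minimal isometric lift and $\mathcal R_i$ reduces $V_i$. Take $Y_+$ for the minimal parts and extend it by $0$ on $\mathcal R_1$; the intertwining, the norm and the compression property are all preserved.

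The main obstacle is the single-operator commutant lifting step, specifically keeping the norm bound at each one-step extension. That is where Parrott's lemma is essential. Everything else is bookkeeping.
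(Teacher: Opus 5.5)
Your proposal is correct. For the ``moreover'' part it is exactly the paper's argument: restrict each $V_j$ to the reducing subspace $\bigvee_{m\ge 0}V_j^m\cH_j$, where it is a minimal isometric lift, apply the minimal case, and extend by zero on the complement.

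The difference lies in the minimal case. The paper does not reprove it; it cites Theorem II.2.3 of Sz.-Nagy--Foias, which is already stated in intertwining form. You instead derive it from single-operator commutant lifting by the $2\times 2$ trick: take $T=T_1\oplus T_2$ and $\tilde X=\cBm{0&0\\ X&0}$, and note that $V_1\oplus V_2$ is minimal because both summands are. You then read off the $(2,1)$ corner and get the norm equality from $\|X\|=\|Y_{21}^*|_{\cH_2}\|\le\|Y_{21}\|\le\|\tilde Y\|=\|X\|$. Finally, you either cite commutant lifting or prove it by Parrott one-step extensions. Your route is more self-contained and shows that Parrott's lemma is the sole source of norm control. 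The paper's is shorter but black-boxes everything except the zero extension.

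Two small precision points.
\begin{enumerate}
\item The normalization $\|X\|=1$ is unnecessary.
\item In the Parrott step, the compression to the old space $\cH_n$ is fixed by the lifting requirement $P_{\cH_n}\tilde X_{n+1}=\tilde X_nP_{\cH_n}$, not by the commutation relation. Commutation instead fixes $\tilde X_{n+1}$ on $V\cH_n$ via $\tilde X_{n+1}Vh=V\tilde X_nh$. These two prescriptions agree on the overlapping block because $\tilde X_n$ commutes with $P_{\cH_n}V|_{\cH_n}$, and both prescribed pieces have norm at most $\|\tilde X\|$. Only the block from $\cH_{n+1}\ominus V\cH_n$ into $\cH_{n+1}\ominus\cH_n$ is left free, which is precisely the setting of Parrott's lemma.
\end{enumerate}
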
 
\begin{proof}
We only prove the moreover part as the rest is Theorem II.2.3 in \cite{MR2760647}. To this end, let $V_j$ acting on $\cK_{j+}$ be any isometric lifts of $T_j$. Consider the spaces
    $$
    \cK_{j0}:=\bigvee_{m\geq 0}V_j^m\cH_j.
    $$Since $V_j^*|_{\cH_j}=T_j^*$ one can check that $\cK_{j0}$ is reducing under $V_j$ and 
    $$
    V_{j0}:=V_j|_{\cK_{j0}}
    $$are minimal isometric lifts of $T_j$. By Theorem II.2.3 in \cite{MR2760647}, there is an operator $Y_{+0}:\cK_{10}\to\cK_{20}$ such that
    $$
    V_{2+}Y_{+0}=Y_{+0}V_{1+}, \quad  \|Y_{+0}\|=\|X\| \quad\mbox{and}\quad
    Y_{+0}^*|_{\cH_2}=X^*.
    $$It then follows that
    $$
    Y_+=\begin{bmatrix}
        Y_{+0} &0\\
        0&0
    \end{bmatrix}:
    \begin{bmatrix}
        \cK_{10} \\
        \cK_{1+}\ominus \cK_{10}
    \end{bmatrix}\to    \begin{bmatrix}
        \cK_{20} \\
        \cK_{2+}\ominus \cK_{20}
    \end{bmatrix}
    $$is the desired operator satisfying \eqref{ClassicalILT}. It should be noted that, a priori, only one of $V_1,V_2$ could be minimal: if $V_1$ is minimal, then we choose $Y_+=\cBm{Y_+ \\ 0}$ and if $V_2$ is minimal, then we choose $Y_+=\cBm{Y_{+0} & 0}$.
\end{proof}
We now state and prove the lifting result for operator pairs satisfying an inner covariance relation.
\begin{theorem}\label{T:LiftEndo}
Let $T$ be an operator on $\cH$, $f$ be an inner function, and $X\in\cB(\cH)$ be such that
\begin{align}\label{CovarianceEndo}
TX=Xf(T).
\end{align}Then the following holds: 
\begin{enumerate}
    \item If $V$ acting on $\cK_+\supset\cH$ be any isometric lift of $T$, then there is $Y_+\in\cB(\cK_+)$ such that $\cH$ is co-invariant under $Y_+$ (and $V$), and 
    \begin{align}\label{SpecialILT}
VY_+=Y_+f(V),\quad \|Y_+\|=\|X\| \quad\mbox{and}\quad Y_+^*|_\cH = X^*.
\end{align}

\item If $U$ acting on $\cK\supset\cH$ be any unitary dilation of $T$, then there is $Y\in\cB(\cK)$ such that $\cH$ is semi-invariant under $Y$ (and $U$ as well), and
\begin{enumerate}
    \item $UY=Yf(U)$, $\|Y\|=\|X\|$ and $P_\cH(U,Y)|_\cH=(T,X)$
    \item The subspace 
    $$
    \cK_+':=\bigvee_{m\geq 0} U^m\cH
    $$is invariant under $Y$, and with
    $
    (V_+',Y_+'):=(U,Y)|_{\cK_+'}$
    we have
    \begin{align}\label{SpecialILT'}
    V_+'Y_+=Y_+f(V_+'),\quad \|Y_+'\|=\|X\| \quad\mbox{and}\quad Y_+^{'*}|_\cH=X^*.
    \end{align}
\end{enumerate}
Furthermore, if the right-covariance relation \eqref{CovarianceEndo} is replaced by the left-covariance relation
\begin{align}\label{CovarianceEndo'}
f(T)X=XT
\end{align}then the conclusion remains true if we make the same replacements in items (1), (2a) and (2b).
\end{enumerate}
\end{theorem}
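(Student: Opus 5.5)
The plan for (1) is to reduce to the classical intertwining lifting Theorem \ref{T:Classical_Iso_ILT}, with $T_1=f(T)$, $T_2=T$, and a suitable lift of $f(T)$ in place of a minimal one. Throughout, $T$ is assumed to be a contraction; this is implicit, since $T$ has isometric lifts. If $X=0$, take $Y_+=0$. Otherwise replace $X$ by $X/\|X\|$ and rescale at the end, since all the relations are homogeneous in $X$.

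The first step is to check that $f(V)$ is an isometric lift of $f(T)$.
\begin{itemize}
\item Write $V$ in its Wold decomposition as a shift part plus a unitary part. On the shift part, $f(V)$ is multiplication by the inner function $f$, hence an isometry.
\item On the unitary part, $f(V)$ is defined and unitary whenever $f$ is a finite Blaschke product, and more generally whenever the spectral measure of that part is absolutely continuous. I expect this to be the one delicate point for a general inner $f$, and I would handle it by these assumptions. In all cases $f(V)$ is a norm limit, or an SOT limit via the Sz.-Nagy--Foias calculus, of polynomials $p_n(V)$.
\item Since $V^*|_\cH=T^*$, we get $p_n(V)^*|_\cH=p_n(T)^*$. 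Passing to the limit gives $f(V)^*|_\cH=f(T)^*$.
\end{itemize}

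Now apply the ``moreover'' part of Theorem \ref{T:Classical_Iso_ILT}, which allows non-minimal lifts, with:
\begin{itemize}
\item $V_1=f(V)$, a lift of $T_1=f(T)$, and $V_2=V$, a lift of $T_2=T$;
\item intertwiner $X$, using $TX=Xf(T)$.
\end{itemize}
This produces $Y_+$ on $\cK_+$ with $VY_+=Y_+f(V)$, $\|Y_+\|=\|X\|$ and $Y_+^*|_\cH=X^*$. In particular $\cH$ is co-invariant under $Y_+$, which proves (1). For the left-covariance version $f(T)X=XT$, swap the roles: take $V_1=V$ and $V_2=f(V)$.

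For (2), let $U$ be a unitary dilation of $T$. By Sarason's lemma on semi-invariant subspaces, $\cK_+'=\bigvee_{m\ge0}U^m\cH$ is $U$-invariant and contains $\cH$ as a co-invariant subspace. Hence $V_+':=U|_{\cK_+'}$ is an isometric lift of $T$, and part (1) gives $Y_+'$ on $\cK_+'$ satisfying \eqref{SpecialILT'}.

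To extend $Y_+'$ to all of $\cK$, note first that $f(U)$ is a unitary extension of $f(V_+')$. Define
$$
\cL:=\bigvee_{n\ge0} f(U)^{*n}\cK_+'.
$$
This subspace reduces $f(U)$, because $f(U)\cK_+'\subset\cK_+'$. On $\cL$, define
$$
Y\Big(\sum_n f(U)^{*n}k_n\Big):=U^{*N}Y_+'\Big(\sum_n f(V_+')^{N-n}k_n\Big)\qquad(N\ge \text{all } n).
$$
The relation $V_+'Y_+'=Y_+'f(V_+')$ shows the right-hand side does not depend on $N$. Well-definedness and the bound $\|Y\|\le\|Y_+'\|$ come from one identity:
$$
\Big\|\sum_n f(V_+')^{N-n}k_n\Big\|=\Big\|f(U)^N\sum_n f(U)^{*n}k_n\Big\|=\Big\|\sum_n f(U)^{*n}k_n\Big\|,
$$
using that $f(U)$ and $U$ are unitary. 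Set $Y=0$ on $\cK\ominus\cL$. Because $\cL$ reduces $f(U)$, we then get $UY=Yf(U)$ on all of $\cK$.

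It remains to check the conclusions of (2).
\begin{itemize}
\item By construction $Y|_{\cK_+'}=Y_+'$, so $\cK_+'$ is $Y$-invariant and $\|Y\|=\|X\|$.
\item $\cH$ is co-invariant in $\cK_+'$ for $Y_+'$ and $\cK_+'$ is invariant under $Y$, so $\cH$ is semi-invariant under $Y$ and $P_\cH Y|_\cH=X$.
\item $P_\cH U|_\cH=T$, since $U$ is a dilation of $T$.
\end{itemize}
The left-covariance case of (2) follows by the same construction with the roles of $U$ and $f(U)$ interchanged in the extension step.
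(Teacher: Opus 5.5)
Your proof is correct. Part (1) coincides with the paper's argument: apply the non-minimal form of Theorem~\ref{T:Classical_Iso_ILT} to $f(V)$ as a lift of $f(T)$ and $V$ as a lift of $T$. The paper passes over two points you handle explicitly: normalizing $X$ to a contraction, and noting that $f(V)$ must be defined on the unitary part of an arbitrary isometric lift (automatic for the finite Blaschke products the paper later uses).

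Part (2) takes a different route. The paper takes adjoints of $V_+'Y_+'=Y_+'f(V_+')$ and views $U^*$ and $f(U)^*$ as (not necessarily minimal) isometric lifts of $(V_+')^*$ and $f(V_+')^*$. It then invokes Theorem~\ref{T:Classical_Iso_ILT} a second time and takes $Y$ to be the adjoint of the resulting lift.

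You instead build the extension by hand. On $\mathcal{L}=\bigvee_{n\ge0}f(U)^{*n}\cK_+'$, the space of the minimal unitary extension of $f(V_+')$ inside $\cK$, the conditions $UY=Yf(U)$ and $Y|_{\cK_+'}=Y_+'$ force $Y=U^{*N}Y_+'f(U)^N$. You then set $Y=0$ on $\cK\ominus\mathcal{L}$, which mirrors the proof of the ``moreover'' part of Theorem~\ref{T:Classical_Iso_ILT}.

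The paper's route is shorter, since it is a pure citation. Yours avoids a second application of commutant lifting and gives more information: every extension of $Y_+'$ satisfying $UY=Yf(U)$ coincides with yours on $\mathcal{L}$, so the only freedom lies on $\cK\ominus\mathcal{L}$.

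You leave one step implicit, namely $UY=Yf(U)$ on $\mathcal{L}$ itself. It follows at once from your formula and its independence of $N$, since $U\,U^{*(N+1)}=U^{*N}$.
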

\begin{proof}[Proof of item (1)] 
Let $V$ acting on $\cK_+\supset\cH$ be an isometric lift of the contractive operator $T$. Then since $f$ is inner, $f(V)$ is an isometric lift of $f(T)$. Thus we may apply Theorem \ref{T:Classical_Iso_ILT} to the pair
    $$
    (T_1,T_2) = (f(T),T)
    $$to obtain a bounded operator $Y_+\in \cB(\cK_+)$ such that $Y_+^*|_\cH=X^*$ and 
    $$
    VY_+=Y_+f(V).
    $$With this, the proof of item (1) is complete.
\end{proof}
\begin{proof}[Proof of item (2)]
Let $U$ acting on $\cK$ be any unitary dilation of $T$. With the $U$-invariant subspace $\cK_+$ as in item (2b), let us consider the isometry $V=U|_{\cK_+}$. Then $V$ is a minimal isometric lift of $T$, and $f(V)=f(U)|_{\cK_+}$ on $\cK_+$ is an isometric lift of $f(T)$. By Theorem \ref{T:Classical_Iso_ILT}, there is a bounded operator $Y_+$ on $\cK_+$ such that
$$
VY_+=Y_+f(V), \quad \|Y_+\|=\|X\|,\quad\mbox{and}\quad
Y_+^*|_\cH=X^*.
$$Taking the adjoint of the intertwining relation above we get
$$
Y_+^*V^*=f(V)^*Y_+^*=\widetilde{f}(V^*)Y_+^*,
$$where $\widetilde{f}$ is the reflection of $f$ defined as 
\begin{align}\label{reflection}
\widetilde{f}(z) = \overline{f(\overline{z})}.
\end{align} 
We now view $U^*$ is an isometric lift of $V^*$ and apply Theorem \ref{T:Classical_Iso_ILT} again to the left covariance relation above to obtain a bounded operator $Y$ on $\cK$ such that 
$$
Y^*U^*=\widetilde{f}(U^*)Y^*, \quad \|Y^*\|=\|Y_+^*\|=\|X^*\|\quad\mbox{and}\quad
Y|_{\cK_+}=Y_+:\cK_+\to\cK_+.
$$The adjoint of the left covariance relation is the same as item (2a). Thus we have proved item (2)

Finally, let us note from the construction in the proof of item (2) above that both $U$ and $Y$ have upper triangular matrix forms 
\begin{align*}
    (U,Y)=\left(\begin{bmatrix}
        V&*\\
        0&*
    \end{bmatrix},
    \begin{bmatrix}
        Y_+ & *\\
        0 & *
    \end{bmatrix}\right):\begin{bmatrix}
        \cK_+\\
        \cK\ominus \cK_+
    \end{bmatrix}\to\begin{bmatrix}
        \cK_+\\
        \cK\ominus \cK_+
    \end{bmatrix}
\end{align*}and the operators $V$, $Y_+$ are lower triangular 
  \begin{align*}
  (V,Y)=\left( 
  \begin{bmatrix}
        T&0\\
        *&*
    \end{bmatrix}
  ,\begin{bmatrix}
        X & 0\\
        * & *
    \end{bmatrix}\right):\begin{bmatrix}
        \cH\\
        \cK_+\ominus \cH 
    \end{bmatrix}\to\begin{bmatrix}
        \cH\\
        \cK_+\ominus \cH 
    \end{bmatrix}.
\end{align*}
Putting these triangular forms together we conclude that $\cH$ is semi invariant under $(U,Y)$.
The proof for the left covariance relation \eqref{CovarianceEndo} proceeds along the same line as above and so we omit the details.
\end{proof}

\section{Isometric lifting theorem}
In this section we prove an isometric dilation theorem for contractive pairs satisfying a left or right inner covariance relation. We first develop the required preliminary results which are interesting in their own rights.

\begin{lemma}\label{L:f-consequences}
    Let $T$ be a contraction, $f$ in $A(\bD)$ be any function and $X$ be any bounded operator such that
    $$
    TX=Xf(T).
    $$Then for any function $g\in A(\bD)$ and $n\geq 0$, we have
    $$
    g(T)X^n = X^n g(f^{\circ n}(T)).
    $$In particular, if $T$ is an isometry and $f,g$ are inner functions, then for every $n\geq0$,
    $$
    X^n = g(T)^*X^n g(f^{\circ n}(T)).
    $$
\end{lemma}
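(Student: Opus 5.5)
The plan is to prove the main identity by induction on $n$. The base is the single-step statement $g(T)X = Xg(f(T))$ for every $g\in A(\bD)$. For the second assertion, I will combine the main identity with $g(T)^*g(T)=I$ whenever $T$ is an isometry and $g$ is inner.

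\emph{Step 1 (polynomials).} From $TX=Xf(T)$ we get, by induction on $k$,
$$T^kX=T^{k-1}Xf(T)=\dots=Xf(T)^k.$$
By linearity, $p(T)X=Xp(f(T))$ for every polynomial $p$. Here $f(T)$ is defined by the disk-algebra functional calculus of the contraction $T$, which is a contractive algebra homomorphism by von Neumann's inequality. Hence $p(f(T))=(p\circ f)(T)$: this holds for monomials because the calculus is multiplicative, and then for all polynomials by linearity.

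\emph{Step 2 (extension to $A(\bD)$).} Take $g\in A(\bD)$ and polynomials $p_m\to g$ uniformly on $\overline{\bD}$. By von Neumann's inequality, $p_m(T)\to g(T)$ in norm. Also $p_m\circ f\to g\circ f$ uniformly on $\overline{\bD}$, and $g\circ f\in A(\bD)$, so $(p_m\circ f)(T)\to (g\circ f)(T)$. The one point needing care is the composition rule $g(f(T))=(g\circ f)(T)$. This follows from the same approximation, since $f(T)$ is itself a contraction by von Neumann, so $g(f(T))$ makes sense. Passing to the limit gives
$$g(T)X=X(g\circ f)(T).$$

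\emph{Step 3 (induction on $n$).} The case $n=0$ is trivial. Assume $g(T)X^n=X^n(g\circ f^{\circ n})(T)$ for all $g\in A(\bD)$. Then
$$g(T)X^{n+1}=X^n (g\circ f^{\circ n})(T)\,X.$$
Now apply Step 2 with $g\circ f^{\circ n}\in A(\bD)$ in place of $g$. This gives
$$(g\circ f^{\circ n})(T)X=X(g\circ f^{\circ n+1})(T),$$
which closes the induction. Here $g(f^{\circ n}(T))$ is read as $(g\circ f^{\circ n})(T)$.

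\emph{Step 4 (the isometric case).} If $T$ is an isometry and $g$ is inner, then $g(T)$ is an isometry, so $g(T)^*g(T)=I$. The paper already uses this fact (for instance, that $f(V)$ is an isometric lift of $f(T)$ when $f$ is inner); it follows, e.g., from the Wold decomposition and the fact that $g$ has unimodular boundary values. Applying $g(T)^*$ on the left of the main identity gives
$$X^n=g(T)^*X^n g(f^{\circ n}(T)).$$
Strictly speaking, inner functions need not lie in $A(\bD)$. For a general inner $g$ (and $f$), Step 2 must instead use the $H^\infty$ functional calculus of the isometry, with bounded pointwise approximation in place of uniform approximation. This extension is the main technical obstacle, and the plan for it is as follows. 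Split $T=S\oplus W$ with $S$ a shift and $W$ unitary, and treat the unitary part via its spectral measure, on which $g$ is defined almost everywhere. A cleaner alternative is to verify the identity for Blaschke $f,g$, which lie in $A(\bD)$ — the only case used later in the paper.
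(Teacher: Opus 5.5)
Your proof is correct and is essentially the paper's argument. Both use induction on $n$, polynomial approximation through von Neumann's inequality, and cancellation of $g(T)^*g(T)=I$. The paper merely orders the steps differently: it first proves $TX^n=X^nf^{\circ n}(T)$ and then approximates in $g$, and it uses your composition rule $g(f(T))=(g\circ f)(T)$ without comment.

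Your fallback to Blaschke $f,g$ is exactly the case the paper's proof actually covers, and it is the right choice. The spectral-measure sketch for a general inner $g$ does not work: $g$ has boundary values only Lebesgue-almost everywhere, while the unitary part of $T$ may have singular spectral measure, so $g(T)$ need not even be defined.
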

\begin{proof}
    A plain use of mathematical induction gives the following intertwining relations for every $n\geq 0$:
    \begin{align}\label{Iteration}
        T^n X=X f(T)^n \quad\mbox{and}\quad TX^n=X^nf^{\circ n}(T).
    \end{align}
    Applying this fact to the second equation in \eqref{Iteration}, we see that for every $r\geq 0$,
    $$
    T^rX^n=X^n (f^{\circ n}(T))^r.
    $$Consequently, for any function $g\in A(\bD)$ and $n\geq 0$, we have
    $$
    g(T)X^n = X^n g(f^{\circ n}(T)).
    $$If $T$ is an isometry and $f,g$ are inner functions, then  $g(f^{\circ n}(T))$ is an isometry for every $n\geq0$. This was to be proved.
\end{proof}
We now establish a couple of preliminary results for operator pairs satisfying right inner covariance relations. The next two propositions formulate the core technical components required for the proof of the main theorem of this section.
\begin{proposition}\label{P:BabyStep}
    Let $X,T$ be contractive operators acting on $\cH$ such that
    $$
    T^*T=I\quad\mbox{and}\quad TX=Xf(T)
    $$for an inner function $f$. Then the operator $V:H^2(\cD_X)\to H^2(\cD_X)$ defined by
    $$
    V(z^n\otimes D_X h) = z^n \otimes D_X f^{\circ(n+1)}(T)h
    $$for every $n\geq 0$ and $h\in\cH$ has the following properties
    $$
    V^*V=I\quad\mbox{and}\quad V M_z = M_z f(V).
    $$
\end{proposition}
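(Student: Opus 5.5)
The plan is to show that $V$ is in fact a block-diagonal operator: on each homogeneous piece $z^n\otimes\cD_X$ it acts by an isometry $W_n$ of $\cD_X$. The isometry property will then be immediate, and the covariance relation will reduce to the identity $f(W_n)=W_{n+1}$. Write $S_n:=f^{\circ(n+1)}(T)$. Since $f$ is inner, every iterate $f^{\circ m}$ is inner, and $T$ is an isometry. Hence each $S_n$, and each $f^{\circ m}(T)$, is an isometry; this is the fact already used in Lemma \ref{L:f-consequences}.

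\textbf{Step 1 (well-definedness and isometry).} Apply Lemma \ref{L:f-consequences} with $g=f^{\circ n}$ and exponent $1$, i.e.\ $g(T)X=Xg(f(T))$. This gives
$$
XS_n=Xf^{\circ n}(f(T))=f^{\circ n}(T)X.
$$
Therefore, for every $h\in\cH$,
$$
\|D_XS_nh\|^2=\|S_nh\|^2-\|XS_nh\|^2=\|h\|^2-\|f^{\circ n}(T)Xh\|^2=\|h\|^2-\|Xh\|^2=\|D_Xh\|^2 .
$$
So $D_Xh\mapsto D_XS_nh$ is a well-defined isometry from $\operatorname{Ran}D_X$ into itself. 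It extends by continuity to an isometry $W_n$ on $\cD_X=\overline{\operatorname{Ran}D_X}$ satisfying $W_nD_X=D_XS_n$. Then $V(z^n\otimes x)=z^n\otimes W_nx$, so $V=\bigoplus_{n\ge0}W_n$ with respect to $H^2(\cD_X)=\bigoplus_n z^n\otimes\cD_X$. Each $W_n$ is an isometry, hence $V^*V=I$.

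\textbf{Step 2 (covariance).} Because $V$ preserves the grading, so does any function of it: $f(V)=\bigoplus_n f(W_n)$. Then
$$
VM_z(z^n\otimes x)=z^{n+1}\otimes W_{n+1}x,\qquad M_zf(V)(z^n\otimes x)=z^{n+1}\otimes f(W_n)x,
$$
so it suffices to show $f(W_n)=W_{n+1}$. From $W_nD_X=D_XS_n$, induction gives $W_n^kD_X=D_XS_n^k$, hence $p(W_n)D_X=D_Xp(S_n)$ for every polynomial $p$. Passing to $f$ gives $f(W_n)D_X=D_Xf(S_n)=D_Xf^{\circ(n+2)}(T)=W_{n+1}D_X$, and density of $\operatorname{Ran}D_X$ in $\cD_X$ yields $f(W_n)=W_{n+1}$.

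\textbf{Main obstacle.} The delicate point is the passage from polynomials to the inner function $f$ in Step 2. This requires that the functional calculus for the isometries $W_n$, $V$ and $T$ be compatible with the intertwining by $D_X$.
\begin{itemize}
\item When $f$ is a Blaschke function (or lies in $A(\bD)$), $f$ is a uniform limit of polynomials on $\overline{\bD}$ and the conclusion is immediate.
\item For a general inner $f$, I would use the Fej\'er means (or dilates $f_r(z)=f(rz)$) $p_k\to f$, with $p_k(W)\to f(W)$ in the strong operator topology for every contraction $W$ for which $f(W)$ is defined, in the same sense in which $f(T)$ is defined in the paper. Then $D_Xp_k(S_n)\to D_Xf(S_n)$ and $p_k(W_n)D_X\to f(W_n)D_X$ strongly.
\end{itemize}
The same remark justifies the grading-preservation $f(V)=\bigoplus_n f(W_n)$.
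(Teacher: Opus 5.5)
Your proposal is correct and is essentially the paper's argument. The paper defines a single isometry $C$ on $\cD_X$ by $CD_X=D_Xf(T)$ (your $W_0$) and uses the intertwining $g(C)D_X=D_Xg(f(T))$ for $g\in A(\bD)$ to identify $V=\operatorname{diag}(f^{\circ n}(C))_{n\ge 0}$; both the isometry property and $VM_z=M_zf(V)$ then follow from $f\circ f^{\circ n}=f^{\circ(n+1)}$. You instead define each block $W_n$ separately and check $f(W_n)=W_{n+1}$ with the same intertwining, which amounts to showing $W_n=f^{\circ n}(C)$.

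Your caveat about passing from polynomials to a general inner $f$ applies equally to the paper's proof, which only invokes density of polynomials in $A(\bD)$ and so is fully justified for Blaschke $f$. Note that your Step 1 also uses Lemma \ref{L:f-consequences} with $g=f^{\circ n}$, which is stated only for $g\in A(\bD)$, so the same approximation remark is needed there too.
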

\begin{proof}
    Define $C$ on $\operatorname{Ran}D_X$ by
    \begin{align}\label{C}
    CD_Xh=D_Xf(T)h \quad\mbox{for every }h\in \cH.
    \end{align} Since $T$ is an isometry and $f$ is inner, $f(T)$ is an isometry. The computation
    \begin{align*}
        \|D_Xf(T)h\|^2 = \langle f(T)h, f(T)h\rangle - \langle Xf(T)h, Xf(T)h \rangle &=
        \langle h, h\rangle - \langle TXh, TXh \rangle\\
        &=\|h\|^2-\|Xh\|^2=\|D_Xh\|^2
    \end{align*}shows that $C$ is an isometry on $\operatorname{Ran}D_X$. We extend it continuously to $\cD_X$ and so it remains an isometry on $\cD_X$. By an inductive argument, we deduce from \eqref{C} that for every $n\geq 0$ and $h\in\cH$,
    $$
    C^nD_Xh=D_X f(T)^nh.
    $$Since polynomials are dense in $A(\bD)$, by a limiting argument we get
    $$
    g(C)D_X=D_Xg(f(T))
    $$for every $g\in A(\bD)$. Apply this for $g=f^{\circ n}$ for every $n\geq 0$ to see that the operator $V$ as stated in the statement is actually the diagonal operator 
    \begin{align}\label{V}
    V=\operatorname{diag}( f^{\circ n}(C))_{n=0}^\infty: \ell^2(\cD_X)\to\ell^2(\cD_X),
    \end{align}and so $f(V)=\operatorname{diag}( f^{\circ (n+1)}(C))_{n=0}^\infty$. Since $C$ is an isometry, and $f$ is inner, each diagonal entry of $V$ is isometry and hence so is $V$. What remains is to check the intertwining $VM_z=M_zf(V)$. It is enough to check on the basis elements $z^n\otimes \xi$ for $n\geq 0$ and $\xi\in\cD_X$:
    \begin{align*}
        VM_z (z^n\otimes \xi) &= z^{n+1}\otimes f^{\circ{n+1}}(C)\xi\\
        &=(M_z\otimes I_{\cD_X})(z^n\otimes f^{\circ{n+1}}(C)\xi)=(M_z\otimes I_{\cD_X})f(V)(z^n\otimes \xi).
    \end{align*}This completes the proof of the assertion.
\end{proof}
In Theorem \ref{T:LiftEndo}, we constructed isometric lifts (and unitary dilations) for a contractive operator $T$, lifting the intertwining operator $X$ to the dilation space preserving its norm. The result below adopts a complementary perspective: given an isometry operator, we first dilate the intertwining operator $X$ and subsequently extend the isometry to the resulting dilation space preserving the isometric structure. This construction will serve as a crucial technical ingredient in establishing our main dilation theorems.
\begin{proposition}\label{P:FinalNail}
Let $f$ be an inner function, $V$ on $\cH$ be an isometry and $X$ on $\cH$ be a contraction such that
    $
    VX=Xf(V).
    $ If $Y$ on $\cK$ is a minimal isometric lift of $X$, then there is an isometric operator $S$ on $\cK$ such that $\cH$ reduces $S$ and
    \begin{align}\label{FinalNail}
    SY=Yf(S).
    \end{align}
\end{proposition}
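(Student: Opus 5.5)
We will use the Schäffer model of the minimal isometric lift of $X$. Take
$$
\cK=\cH\oplus H^2(\cD_X),
$$
where $\cD_X=\overline{\operatorname{Ran}}D_X$ and $D_X=(I-X^*X)^{1/2}$, and set
$$
Y=\begin{bmatrix} X & 0\\ \mathbf{E}D_X & M_z\end{bmatrix},
$$
where $\mathbf{E}:\cD_X\to H^2(\cD_X)$ embeds $\xi$ as the constant $1\otimes\xi$. Since any two minimal isometric lifts of $X$ are unitarily equivalent via a unitary fixing $\cH$, it suffices to construct $S$ for this model and transport it back.

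The candidate is
$$
S=V\oplus W, \qquad W(z^n\otimes D_Xh)=z^n\otimes D_Xf^{\circ(n+1)}(V)h,
$$
where $W$ is the isometry on $H^2(\cD_X)$ supplied by Proposition~\ref{P:BabyStep}, applied with $T=V$. That proposition gives $W^*W=I$ and $WM_z=M_zf(W)$. By construction $\cH$ reduces $S$, and $S$ is an isometry because both $V$ and $W$ are.

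We then check $SY=Yf(S)$ entrywise, using $f(S)=f(V)\oplus f(W)$ (valid since $S$ is a direct sum and $f\in H^\infty$).
\begin{itemize}
\item The $(1,1)$ entry reads $VX=Xf(V)$, which is the hypothesis.
\item The $(1,2)$ entries are both $0$.
\item The $(2,2)$ entry reads $WM_z=M_zf(W)$, which is Proposition~\ref{P:BabyStep}.
\item The $(2,1)$ entry requires $W\mathbf{E}D_X=\mathbf{E}D_Xf(V)$. Since $W$ acts on constants by $1\otimes D_Xh\mapsto 1\otimes D_Xf(V)h$ (the case $n=0$), this holds exactly.
\end{itemize}

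The main obstacle is making sure that $f(W)$ is the right diagonal operator. In the proof of Proposition~\ref{P:BabyStep}, $W=\operatorname{diag}(f^{\circ n}(C))$ with $C$ the isometry $D_Xh\mapsto D_Xf(V)h$. Consequently the $(2,2)$ verification uses $f(f^{\circ n}(C))=f^{\circ(n+1)}(C)$, which requires the functional calculus of inner functions to compose correctly for isometries: $f(g(C))=(f\circ g)(C)$.

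To justify this, I would use the Wold decomposition. On the unitary part, the identity follows from the Borel functional calculus. On the shift part, $g(C)$ is unitarily equivalent to multiplication by $g$ on a vector-valued $H^2$ space, and $f(M_g)=M_{f\circ g}$ because polynomial approximation in $f$ converges strongly, for instance via the $r$-dilates $f_r$. I would invoke the $H^\infty$ functional calculus for completely non-unitary contractions, as in Sz.-Nagy--Foias, in the same way the paper already does when it writes $f(V)$ for inner $f$.

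Finally, transporting $S$ through the unitary equivalence between the model lift and the given $Y$ preserves all three properties: $\cH$ reduces $S$, $S$ is an isometry, and $SY=Yf(S)$.
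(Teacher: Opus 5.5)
Your proposal is correct and is essentially the paper's second proof: you use Sch\"affer's model $\cK=\cH\oplus H^2(\cD_X)$ for the minimal isometric lift and $S=V\oplus W$ with $W=\operatorname{diag}(f^{\circ n}(C))$ from Proposition~\ref{P:BabyStep}, you check $SY=Yf(S)$ entry by entry (the $(2,1)$ entry reducing to $CD_X=D_Xf(V)$), and you transport to an arbitrary minimal lift via uniqueness up to a unitary fixing $\cH$. Your extra care over the composition rule $f(f^{\circ n}(C))=f^{\circ(n+1)}(C)$ addresses a point the paper uses tacitly; for the Blaschke functions used later in the paper it follows directly from von Neumann's inequality and polynomial approximation in $A(\bD)$.
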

We present two distinct proofs of this result. The first is primarily analytic, whereas the second leverages the geometric structure of classical dilation theory. Translating the problem into operator-theoretic terms with $T=\pi(z)$, our objective is to construct an isometry $S$ that commutes with $Y$ via the relation $SY=Yf(S)$, while leaving $\cH$ reducing. The proofs yield precisely such an operator.
\begin{proof}[The first proof]
    The minimality assumption on $Y$ gives
    $
    \cK= \bigvee_{n\geq 0} Y^n\cH.
    $ Let us set
    \begin{align}\label{DefineS}
    S (Y^nh):= Y^n f^{\circ n}(T)h \quad \mbox{for every }h\in\cH \mbox{ and } n\geq 0
    \end{align}
    and extend linearly to 
    $$
    \cM:=\operatorname{span}\{Y^nh:h\in\cH, n\geq 0\}.
    $$ We claim that $S$ on $\cM$ is an isometry and it satisfies the desired properties \eqref{FinalNail}. Once this is established on $\cM$, $S$ can be extended to $\cK$ continuously and the properties will continue to hold in $\cK$ by continuity of the operators involved.

    For $h,h'\in\cH$ and non-negative integers $m,r$ we compute
    \begin{align*}
        \langle S^{m+r}(Y^{m+r}h), S^r(Y^rh')\rangle &=  \langle Y^{m+r} f^{\circ(m+r)}(T)h, Y^rf^{\circ r}(T)h'\rangle_\cK \\
        &= \langle Y^{m} f^{\circ(m+r)}(T)h, f^{\circ r}(T)h'\rangle_\cK \\
        &= \langle X^{m} f^{\circ(m+r)}(T)h, f^{\circ r}(T)h'\rangle_\cH\\
        &=\langle f^{\circ r}(T)^*X^{m} f^{\circ(m+r)}(T)h, h'\rangle_\cH  \\
        &=\langle X^{m} h, h'\rangle_\cH
        =\langle Y^{m} h, h'\rangle_\cK=\langle Y^{m+r}h, Y^rh'\rangle_\cK.
    \end{align*}Here we used several times the facts that $Y^*|_\cH=X^*$,  $f^{\circ r}(T)$ is an isometry and Lemma \ref{L:f-consequences}. This shows that $S$ on $\cM$ is an isometry.

    We now turn to showing that $S$ has the properties as stated in \eqref{FinalNail}. For the intertwining relation, we proceed as follows.  We use the definition of $S$ and mathematical induction to note that for every $m,n\geq 0$ and $h\in \cH$,
    \begin{align}\label{DefineS'}
        S^mY^nh = Y^n (f^{\circ n}(T))^mh.
    \end{align}
Let the power series representation of $f$ around the origin be
    $
    f(z) = \sum_{m\geq0} \alpha_m z^m$.
    Use \eqref{DefineS'} to note that for every $n\geq 0$ and $h\in\cH$,
    \begin{align*}
        f(S)(Y^nh) = \sum_m \alpha_m S^mY^nh = \sum_m \alpha_m Y^n (f^{\circ n}(T))^mh = Y^n f^{\circ (n+1)}(T)h.
    \end{align*}
Finally, we use the above identity on $\cH$ to note that for every $n\geq 0$ and $h\in\cH$,
    \begin{align*}
        SY(Y^n h) =Y^{n+1} f^{\circ(n+1)}(T)(h)= Y Y^n f^{\circ(n+1)}(T)(h)=Yf(S)(Y^nh).
    \end{align*}

To see that $\cH$ reduces $S$ and $S|_\cH=T$, we first note by putting $n=0$ in \eqref{DefineS} that the restriction of $S$ to $\cH$ is $T$. To see that $S^*|_\cH=T^*$, we compute for every $n\geq0$, and $h\in \cH$
\begin{align*}
    \langle S^*h, Y^nh' \rangle = \langle h, SY^nh' \rangle &= 
    \langle h, Y^nf^{\circ n}(T)h' \rangle \\
    &= \langle h, X^nf^{\circ n}(T)h' \rangle \\
    &= \langle h, TX^nh' \rangle =\langle T^*h , X^nh'\rangle = \langle T^*h , Y^nh'\rangle.
\end{align*}
    In the computation above we again made crucial use of Lemma \ref{L:f-consequences} and the fact that $Y$ is a lift of $X$. This completes the first proof of the proposition.
\end{proof}
For the second proof we use an elegant construction of isometric lift due to Sch\"affer \cite{MR68740}: Given a contractive operator $T$ on $\cH$, it is an easy check that operator matrix
\begin{align}\label{Schaffer}
    V=\begin{bmatrix}
        T&\\
        D_T& \\
        & I_{\cD_T} &\\
        & & I_{\cD_T}\\
        & & & \ddots
    \end{bmatrix}:\begin{bmatrix}
        \cH \\ \cD_T \\ \cD_T \\ \vdots
    \end{bmatrix}\to \begin{bmatrix}
        \cH \\ \cD_T \\ \cD_T \\ \vdots
    \end{bmatrix}
\end{align}is an isometry. That it is a lift of $T$ is clear from its representation. Furthermore, the isometric lift above is \textit{minimal}, i.e., $\cH$ is the smallest invariant subspace for $V$ that contains $\cH$. This is well-known -- see e.g., \cite[Chapter I]{MR2760647}.
\medskip

\noindent
\begin{proof}[The second proof]
    The idea comes from the first proof and the tools come from Proposition \ref{P:BabyStep}. Indeed, let
    $$
    Y=\begin{bmatrix}
        X&0\\
        \textbf{ev}_0^* D_X & M_z
    \end{bmatrix}:
    \begin{bmatrix}
        \cH\\
        H^2(\cD_X)
    \end{bmatrix}\to \begin{bmatrix}
        \cH\\
        H^2(\cD_X)
    \end{bmatrix}
    $$be the isometric lift of $X$ constructed by Sch\"affer \cite{MR68740}. Let us recall the well-known result that any two minimal isometric lifts of a contraction are unitarily equivalent (see e.g., \cite{BallSau2026}) and if $(X,T)$ satisfies a covariance relation, then the same relation is satisfied by any pair that is jointly unitarily equivalent to $(X,T)$. Thus it is enough to prove the proposition for this choice of isometric lift. Set
    $$
    S=\begin{bmatrix}
        T&0\\
        0&V
    \end{bmatrix}:
    \begin{bmatrix}
        \cH\\
        H^2(\cD_X)
    \end{bmatrix}\to \begin{bmatrix}
        \cH\\
        H^2(\cD_X)
    \end{bmatrix}
    $$where $V$ is the isometry constructed out of $X,T$ in Proposition \ref{P:BabyStep}. Since $T$ is an isometry, so is $S$. A simple matrix computation reveals that for the intertwining  
    $SY=Yf(S)$ to happen it is necessary and sufficient that
    $$
    TX=Xf(T),\quad V\textbf{ev}_0^*D_X=\textbf{ev}_0^*D_Xf(T),\quad\mbox{and}\quad VM_z=M_zf(V).
    $$
The first intertwining is part of the original assumption, the last intertwining is established in Proposition \ref{P:BabyStep}. For the intertwining in the middle, let us recall that $V=\operatorname{diag}(f^{\circ n}(C))_{n=0}^\infty$ and so it is equivalent to $CD_X=D_Xf(T)$, which is the very definition of $C$ as in \eqref{C}.    
\end{proof}
A quick corollary to the second proof is given below.
\begin{corollary}\label{C:addition}
    With the notations as in the statement of Proposition \ref{P:FinalNail}, if $T$ is unitary, then $S$ can be chosen to be unitary as well.
\end{corollary}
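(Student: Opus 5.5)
The plan is to read the conclusion off the block structure used in the second proof of Proposition \ref{P:FinalNail}. There, after replacing $Y$ by the unitarily equivalent Sch\"affer lift on $\cH\oplus H^2(\cD_X)$, we took
$$
S=\begin{bmatrix} T&0\\ 0&V\end{bmatrix},\qquad V=\operatorname{diag}\big(f^{\circ n}(C)\big)_{n=0}^\infty ,
$$
where $C$ on $\cD_X$ is the isometry determined by $CD_X=D_Xf(T)$ as in \eqref{C}. Since $T$ is unitary by hypothesis, $S$ is unitary as soon as $V$ is. For an inner $f$ and a unitary $C$, the operator $f^{\circ n}(C)$ is unitary, because an inner function of a unitary operator is unitary through the functional calculus. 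So every diagonal entry of $V$ would be unitary, and hence $V$ itself. Everything therefore reduces to showing that $C$ is unitary, that is, that $C$ maps onto $\cD_X$.

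To get surjectivity, I would note that $\operatorname{Ran} C$ is closed and contains $D_Xf(T)\cH$. If $f(T)$ is surjective, this set is $D_X\cH$, which is dense in $\cD_X$, and then $C$ is onto. Now $f(T)$ is an isometry because $T$ is an isometry and $f$ is inner. When $T$ is unitary, $f(T)$ is in fact unitary: by the spectral theorem, $f(T)=\int f\,dE$, and $|f|=1$ almost everywhere with respect to Lebesgue measure on $\bT$. This is the step where I expect the main obstacle. The spectral measure of $T$ may fail to be absolutely continuous, so the boundary values of $f$ need not be defined $E$-almost everywhere, and $f(T)$ then has to be interpreted carefully.

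I would first try to avoid this issue by using the standing setting of the paper, where $f$ is a finite Blaschke product. Such an $f$ is continuous on $\overline{\bD}$ with $|f|=1$ on $\bT$, so $f(T)$ is a unitary given by continuous functional calculus on $\sigma(T)\subset\bT$. For the iterates, $f^{\circ n}(C)$ is then unitary for the same reason once $C$ is known to be unitary.

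Finally, I would check that the intertwining $SY=Yf(S)$ and the reducing property of $\cH$ carry over unchanged from the second proof, since the construction itself has not changed. Transporting back along the unitary equivalence between $Y$ and the Sch\"affer lift preserves unitarity of $S$, which would finish the argument.
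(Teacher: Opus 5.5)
Your proposal is correct and follows the paper's own proof. Both read off $S=\operatorname{diag}(T,V)$ from the second proof of Proposition \ref{P:FinalNail}, note that $f(T)$ and hence $C$ are unitary when $T$ is, and conclude that each $f^{\circ n}(C)$, and therefore $V$ and $S$, are unitary. Your density argument for the surjectivity of $C$ and your use of the continuous functional calculus for a finite Blaschke product to make sense of $f(T)$ fill in details that the paper's one-line proof leaves implicit.
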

\begin{proof}
    We observed in the second proof of Proposition \ref{P:FinalNail} that the (22) entry of $S$ with respect to the decomposition $\cBm{\cH \\ \cK\ominus \cH}$ is given by
    $$
    V=\operatorname{diag}( f^{\circ n}(C))_{n=0}^\infty: \ell^2(\cD_X)\to\ell^2(\cD_X)
    $$where $C:\cD_X\to\cD_X$ is defined on $\operatorname{Ran}D_X$ by
    $$
    CD_Xh=D_Xf(T)h.
    $$In case $T$ is unitary, $f(T)$ is unitary too and hence so is $C$. This makes each diagonal entry of $V$ a unitary operator and therefore $V$ is a unitary. Since the (11) entry of $S$ is $T$, which by assumption is a unitary, $S$ is unitary as well.
\end{proof}
We record another important consequence.
\begin{corollary}\label{C:Uni/Iso/Ext}
    If $(U,V)$ is a unitary/isometry pair on $\cH$ and $f$ is an inner function such that
    $$
    f(U)V=VU.
    $$Then there is a unitary pair $(U_1,U_2)$ which extends $(U,V)$ such that
    \begin{align}\label{Aux4}
    f(U_1)U_2=U_2U_1.
    \end{align}Moreover, $U_2$ can be taken to be the minimal unitary extension of $V$.
\end{corollary}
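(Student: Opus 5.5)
The plan is to reduce this corollary to Proposition~\ref{P:FinalNail} and Corollary~\ref{C:addition} by passing to adjoints, so that $U$ plays the role of the isometry and $V$ that of the intertwining contraction.

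\textbf{Step 1: pass to adjoints.} Taking adjoints in $f(U)V=VU$ gives $V^*f(U)^*=U^*V^*$. Since $f(U)^*=\widetilde f(U^*)$, with $\widetilde f$ the reflection \eqref{reflection}, this reads
$$
U^*V^*=V^*\widetilde f(U^*).
$$
The function $\widetilde f$ is again inner. So with $T:=U^*$, a unitary, and $X:=V^*$, a contraction (in fact a coisometry), we are exactly in the setting of Proposition~\ref{P:FinalNail}, with the inner function $\widetilde f$.

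\textbf{Step 2: apply the proposition.} Let $Y$ on $\cK$ be a minimal isometric lift of $V^*$. Proposition~\ref{P:FinalNail}, together with Corollary~\ref{C:addition} (which applies because $T=U^*$ is unitary), produces a unitary $S$ on $\cK$ such that $\cH$ reduces $S$, $S|_\cH=U^*$, and $SY=Y\widetilde f(S)$.

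\textbf{Step 3: show $Y$ is unitary.} This is the step I expect to need the most care. Because $Y$ lifts $V^*$, we have $Y^*|_\cH=V$. I will check $YY^*=I$ on the spanning vectors $Y^nh$ of $\cK=\bigvee_{n\ge0}Y^n\cH$.
\begin{itemize}
\item For $n\ge1$ this is immediate: $YY^*Y^nh=Y^nh$.
\item For $n=0$ we need $YY^*h=YVh$ to equal $h$. First, $P_\cH YVh=V^*Vh=h$. Second, $\|YVh\|=\|Vh\|=\|h\|$. A vector of norm $\|h\|$ whose projection onto $\cH$ is $h$ must equal $h$, so $YVh=h$.
\end{itemize}
Hence $Y$ is unitary.

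\textbf{Step 4: set $U_1:=S^*$ and $U_2:=Y^*$.} Taking adjoints in $SY=Y\widetilde f(S)$ gives $Y^*S^*=\widetilde f(S)^*Y^*=f(S^*)Y^*$, which is \eqref{Aux4}: $f(U_1)U_2=U_2U_1$. The extension properties follow as well.
\begin{itemize}
\item $U_1$ extends $U$: since $\cH$ reduces $S$ and $S|_\cH=U^*$, we get $S^*|_\cH=U$.
\item $U_2$ extends $V$: this is $Y^*|_\cH=V$.
\item $U_2$ is minimal: we have $\cK=\bigvee_{n\ge0}U_2^{*n}\cH$, and $U_2$ is a unitary extending the isometry $V$. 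So $\cK$ is the smallest reducing subspace for $U_2$ containing $\cH$, i.e. $U_2$ is the minimal unitary extension of $V$.
\end{itemize}
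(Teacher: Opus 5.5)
Your proposal is correct and follows the paper's proof. Both pass to the adjoint relation $U^*V^*=V^*\widetilde f(U^*)$, apply Proposition~\ref{P:FinalNail} together with Corollary~\ref{C:addition} to $(T,X)=(U^*,V^*)$, take $U_2^*$ to be the minimal isometric lift of $V^*$, and set $U_1=S^*$. The only difference is the direction of the identification. The paper starts from the minimal unitary extension $U_2$ of $V$ and simply asserts that $U_2^*$ is a minimal isometric lift of $V^*$. You instead start from a minimal isometric lift $Y$ of the co-isometry $V^*$ and verify in Step~3 that it is unitary, which supplies the detail the paper leaves implicit.
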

\begin{proof}
 The adjoint of the given intertwining relation is $U^*V^*=V^*\widetilde{f}(U^*)$ where, as usual, $\widetilde{f}$ is the reflection of $f$. Let $U_2$ be the minimal unitary extension of $V$. Then $U_2^*$ then is a minimal isometric lift of $V^*$. By Corollary \ref{C:addition}, there is a unitary operator $U_1^*$ that extends $U^*$ such that $U_1^*U_2^*=U_2^*\widetilde{f}(U_1^*)$, which is equivalent to the desired intertwining \eqref{Aux4}. 
\end{proof}
One may ask whether Corollary \ref{C:Uni/Iso/Ext} extends to right inner covariance relations. The answer is affirmative; however, as the proof requires a technical analytic machinery developed later, we defer it to the next section. Continuing with our present development, we observe that
\begin{corollary}
    If $U$ is a unitary operator for which there is an isometry $V$ and an inner function $f$ such that $f(U)V=VU$, then there is a unitary extension $U_1$ of $U$ such that $f(U_1)$ is unitarily equivalent to $U_1$.
\end{corollary}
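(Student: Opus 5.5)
Apply Corollary \ref{C:Uni/Iso/Ext} to the pair $(U,V)$ and use the unitary $U_2$ as the implementing unitary. Concretely, the hypothesis $f(U)V=VU$, with $U$ unitary, $V$ an isometry and $f$ inner, is exactly the setting of Corollary \ref{C:Uni/Iso/Ext}. That corollary gives a unitary pair $(U_1,U_2)$ on some $\cK\supset\cH$ extending $(U,V)$ with
$$
f(U_1)U_2=U_2U_1.
$$
Set $W:=U_2$. Since $W$ is unitary, we may multiply on the right by $W^{*}=W^{-1}$ to get
$$
f(U_1)=W U_1 W^{*}.
$$
This says $f(U_1)$ is unitarily equivalent to $U_1$, with the unitary $W$ implementing the equivalence.

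Two routine points remain. First, $U_1$ must be a unitary extension of $U$. This is part of the conclusion of Corollary \ref{C:Uni/Iso/Ext}: $U_1$ extends $U$, and $\cH$ reduces it because $U$ is unitary. Second, $f(U_1)$ must be meaningful and unitary. Here $U_1$ is unitary and $f$ is an inner function, taken in $A(\bD)$ or, more generally, via the $H^\infty$ functional calculus. If $f$ has singular boundary behaviour one needs $U_1$ to have spectral measure compatible with the calculus, but the paper treats $f(U)$ for unitaries throughout (Blaschke-type inner $f$), so I would follow the same convention.

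I expect no real obstacle, since all the work sits in Corollary \ref{C:Uni/Iso/Ext} (and thus Corollary \ref{C:addition}). The one point to watch is that $U_2$ is genuinely unitary and not just an isometry; otherwise the relation would only give $f(U_1)$ as the restriction of an equivalence to $\operatorname{Ran}U_2$. This is guaranteed because $U_2$ is taken to be the minimal unitary extension of $V$.
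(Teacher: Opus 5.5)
Your proposal is correct and is exactly the paper's argument: you invoke Corollary~\ref{C:Uni/Iso/Ext} to get the unitary pair $(U_1,U_2)$ extending $(U,V)$ with $f(U_1)U_2=U_2U_1$, then multiply \eqref{Aux4} on the right by $U_2^*$ to obtain $f(U_1)=U_2U_1U_2^*$. Your side remarks, that $\cH$ reduces $U_1$ and that $U_2$ is genuinely unitary, are accurate, but both are already supplied by that corollary.
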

\begin{proof}
    This follows once $U_2^*$ is multiplied on the right side of \eqref{Aux4}.
\end{proof}
We now prove the main result of this section.
\begin{theorem}\label{T:Lift1}
Let $f$ be an inner function, $(T_1,T_2)$ on $\cH$ be a contractive operator pair such that
$
T_1T_2=T_2f(T_1).
$ Then there is a Hilbert space $\cK\supset\cH$, an isometric lift $(V_1,V_2)$ acting on $\cK$ of $(T_1,T_2)$ such that
\begin{align}\label{lift}
V_1V_2=V_2f(V_1).    
\end{align}
\end{theorem}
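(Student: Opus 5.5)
The plan is a two-step lift: first lift $T_1$ while keeping $T_2$, using Theorem \ref{T:LiftEndo}, and then lift the second operator while extending the isometry, using Proposition \ref{P:FinalNail}.

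\textbf{Step one.} Let $W$ on $\cK_+\supset\cH$ be a minimal isometric lift of $T_1$, for instance Sch\"affer's construction \eqref{Schaffer}. The relation $T_1T_2=T_2f(T_1)$ is exactly the right covariance relation \eqref{CovarianceEndo} with $T=T_1$ and $X=T_2$. Item (1) of Theorem \ref{T:LiftEndo} therefore gives $Y_+\in\cB(\cK_+)$ with
\[
WY_+=Y_+f(W),\qquad \|Y_+\|=\|T_2\|\le 1,\qquad Y_+^*|_\cH=T_2^*.
\]
We also have $W^*|_\cH=T_1^*$. So $(W,Y_+)$ is a lift of $(T_1,T_2)$ satisfying the covariance relation, in which the first entry is an isometry and the second is a contraction.

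\textbf{Step two.} Apply Proposition \ref{P:FinalNail} to the isometry $V=W$ on $\cK_+$ and the contraction $X=Y_+$, which satisfy $WY_+=Y_+f(W)$. Let $V_2$ on $\cK\supset\cK_+$ be a minimal isometric lift of $Y_+$. The proposition then gives an isometry $V_1$ on $\cK$ with $V_1V_2=V_2f(V_1)$ and with $\cK_+$ reducing $V_1$. From the first proof, $V_1|_{\cK_+}=W$. Since $\cK_+$ reduces $V_1$, we get $V_1^*|_{\cK_+}=W^*$, and hence $V_1^*|_\cH=W^*|_\cH=T_1^*$.

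\textbf{Transitivity.} It remains to check that $V_2$ lifts $T_2$. We have $V_2^*|_{\cK_+}=Y_+^*$ and $Y_+^*|_\cH=T_2^*$, so $V_2^*|_\cH=T_2^*$. Thus $(V_1,V_2)$ is an isometric lift of $(T_1,T_2)$ satisfying \eqref{lift}.

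\textbf{Main obstacle.} The delicate point is the compatibility of the lifting notions. Proposition \ref{P:FinalNail} needs the isometry hypothesis on $W$, which Step one supplies. We also need $S|_{\cK_+}=W$ together with the reducing property, so that adjoints restrict correctly. This is exactly what the first proof of the proposition verifies: $S|_\cH=T$ and $S^*|_\cH=T^*$, applied here with $\cH$ replaced by $\cK_+$.

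A secondary check is that Step two uses only $\|Y_+\|\le1$. This is guaranteed because Theorem \ref{T:LiftEndo} gives $\|Y_+\|=\|T_2\|$.
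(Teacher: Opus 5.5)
Your proposal is correct and follows essentially the same route as the paper. You first use Theorem \ref{T:LiftEndo}(1) to get an isometry/contraction lift $(W,Y_+)$ of $(T_1,T_2)$ satisfying the covariance relation, then take a minimal isometric lift $V_2$ of $Y_+$ and apply Proposition \ref{P:FinalNail} to obtain an isometry $V_1$ that is reduced by $\cK_+$ and extends $W$; your explicit adjoint-restriction check, which relies on the fact from the first proof of that proposition that $V_1|_{\cK_+}=W$, is exactly what the paper leaves as ``clear from the construction.''
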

\begin{proof}
By item (1) of Theorem \ref{T:LiftEndo}, there is a Hilbert space $\cK'\supset\cH$, an isometric lift $V_1'$ of $T_1$ and a bounded operator $Y$ on $\cK'$ such that
    $$
    V_1'Y=Yf(V_1'),\quad \|Y\|=\|T_2\|\quad\mbox{and}\quad Y^*|_\cH=T_2^*
    $$Now let $V_2$ acting on $\cK$ be minimal isometric lift of $Y$. By Proposition \ref{P:FinalNail}, there is a Hilbert space $\cK\supset \cK'\supset\cH$, an isometric operator $V_1$ on $\cK$ such that $\cK_1'$ is reducing under $V_1$ and $(V_1,V_2)$ satisfies \eqref{lift}. It is clear from the construction that $(V_1,V_2)$ is a lift of $(T_1,T_2)$.
\end{proof}

\section{Extension and dilation to a unitary pair}
The main goal of this section is to prove that every contractive pair subject to a right inner covariance relation admits a unitary dilation that preserves the relation. This is achieved by first extending an isometric pair to a unitary pair while maintaining the covariance structure, which we execute in two steps.
\begin{proposition}\label{P:IsoExt}
    Suppose $f$ is an inner function, $(V_1,V_2)$ is an isometric pair such that
    \begin{align}\label{IsoExt}
    V_1V_2=V_2f(V_1).
    \end{align}
    Then there is a Hilbert space $\cK\supset \cH$, a unitary/isometry pair $(U_1,\widetilde V_2)$ that extends $(V_1,V_2)$ and satisfies 
    \begin{align}\label{IsoExt''}
    U_1\widetilde V_2=\widetilde V_2f(U_1).
    \end{align}
\end{proposition}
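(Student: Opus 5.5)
The plan is to build the pair in two stages: first make $V_1$ unitary at the cost of replacing $V_2$ by a contraction, then restore the isometric property of the second operator without losing unitarity of the first. Let $U_1$ on $\cK_0\supset\cH$ be the minimal unitary extension of $V_1$. Since $\cH$ is invariant under $U_1$ and $U_1|_\cH=V_1$, the unitary $U_1^*$ is an isometric lift of $V_1^*$. Taking adjoints in \eqref{IsoExt} gives
$$
V_2^*V_1^*=f(V_1)^*V_2^*=\widetilde f(V_1^*)V_2^*,
$$
where $\widetilde f$ is the reflection \eqref{reflection}. This is a left covariance relation $\widetilde f(T)X=XT$ with $T=V_1^*$ and $X=V_2^*$, and $\widetilde f$ is again inner. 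I will apply the left-covariance version of item (1) of Theorem \ref{T:LiftEndo} to the isometric lift $U_1^*$ of $T$. It produces $Y\in\cB(\cK_0)$ with
$$
\widetilde f(U_1^*)Y=YU_1^*,\qquad \|Y\|=\|V_2\|=1,\qquad Y^*|_\cH=V_2 .
$$
Set $W:=Y^*$. Taking adjoints gives $U_1W=Wf(U_1)$. Moreover, $W$ is a contraction on $\cK_0$, $\cH$ is invariant under $W$, and $W|_\cH=V_2$.

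The second stage upgrades $W$ to an isometry. Let $\widetilde V_2$ on $\cK\supset\cK_0$ be the minimal isometric lift of $W$; without loss of generality take it to be the Sch\"affer lift \eqref{Schaffer}. Apply Proposition \ref{P:FinalNail} to the isometry $U_1$ and the contraction $W$, which satisfy $U_1W=Wf(U_1)$. This yields an isometry $S$ on $\cK$ such that $\cK_0$ reduces $S$, $S|_{\cK_0}=U_1$ (as the construction there shows), and $S\widetilde V_2=\widetilde V_2 f(S)$. Because $U_1$ is unitary, Corollary \ref{C:addition} lets us take $S$ unitary. Renaming $S$ as $U_1$, the pair $(U_1,\widetilde V_2)$ is then a unitary/isometry pair satisfying \eqref{IsoExt''}, and its first entry extends $V_1$.

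The step I expect to need the most care is checking that $\widetilde V_2$ genuinely \emph{extends} $V_2$, and does not merely lift $W$. The point is that for $h\in\cH$ we have $\|Wh\|=\|V_2h\|=\|h\|$, so $D_Wh=0$. In the Sch\"affer matrix form the lift acts on $\cK_0\oplus\cD_W\oplus\cdots$ by $k\mapsto Wk\oplus D_Wk\oplus 0\oplus\cdots$. Hence $\widetilde V_2h=Wh=V_2h\in\cH$ for all $h\in\cH$, so $\cH$ is invariant under $\widetilde V_2$ and $\widetilde V_2|_\cH=V_2$. Combined with $S|_\cH=U_1|_\cH=V_1$, this shows that $(U_1,\widetilde V_2)$ extends $(V_1,V_2)$. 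A secondary point is to confirm from the first proof of Proposition \ref{P:FinalNail} that $S$ restricts to the given isometry on the smaller space ($n=0$ in \eqref{DefineS}). Also, the norm equality $\|Y\|=\|V_2\|=1$ is exactly what makes $W$ a contraction, so that its minimal isometric lift is available.
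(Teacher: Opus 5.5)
Your proof is correct and follows essentially the same route as the paper. You lift $V_2^*$ along the isometric lift $U_1^*$ of $V_1^*$ to get a contraction $W$ with $U_1W=Wf(U_1)$ and $W|_\cH=V_2$, then take a minimal isometric lift of $W$ and use Proposition~\ref{P:FinalNail} with Corollary~\ref{C:addition} to replace $U_1$ by a unitary $S$ satisfying the relation; your check via $D_Wh=0$ is the same observation as the paper's argument that $A_{31}=0$ because $V_2$ is already an isometry, and invoking the left-covariance form of Theorem~\ref{T:LiftEndo}(1) instead of Theorem~\ref{T:Classical_Iso_ILT} directly is only cosmetic.
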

\begin{proof}
The adjoint of \eqref{IsoExt} is $\widetilde{f}(V_1^*)V_2^*=V_2^*V_1^*$, where $\widetilde{f}(z)=\overline{f(\overline{z})}$. Let 
    \begin{align}\label{U1prime}
    U_1'=\begin{bmatrix}
        V_1 & *\\
        0 & *
    \end{bmatrix}:\begin{bmatrix}
        \cH \\
        \cK_1\ominus \cH
    \end{bmatrix}\to \begin{bmatrix}
        \cH \\
        \cK_1\ominus \cH
    \end{bmatrix}
    \end{align} on $\cK_1$ be a unitary extension of $V_1$. Then $U_1'^*$ is an isometric lift of $V_2^*$. Apply Theorem \ref{T:Classical_Iso_ILT} to the choice $(T,X)=(V_1^*,V_2^*)$ in order to obtain a bounded operator $Y^*$ such that
    \begin{align}\label{Aux3}
    \widetilde{f}(U_1'^*)Y^*=Y^*U_1'^*, \quad \|Y\|=1\quad\mbox{and}\quad Y=\begin{bmatrix}
        V_2 & *\\
        0 & *
    \end{bmatrix}.
    \end{align}The adjoint of the intertwining relation above is $U_1'Y=Yf(U_1')$. If 
    \begin{align}\label{wideTilde}
    \widetilde V_2=\begin{bmatrix}
        Y & 0\\
        * & *
    \end{bmatrix}: \begin{bmatrix}
        \cK_1 \\
        \cK\ominus \cK_1
    \end{bmatrix}\to \begin{bmatrix}
        \cK_1 \\
        \cK\ominus \cK_1
    \end{bmatrix}
    \end{align} acting on $\cK$ is a minimal isometric lift of $Y$, then by Proposition \ref{P:FinalNail} and Corollary \ref{C:addition}, there is a unitary operator $U_1$ on $\cK$ such that
    \begin{align}\label{U1}
    U_1\widetilde V_2=\widetilde V_2f(U_1) \quad\mbox{and}\quad U_1=\begin{bmatrix}
        U_1' & 0\\
        0 & *
    \end{bmatrix}:\begin{bmatrix}
        \cK_1 \\
        \cK\ominus \cK_1
    \end{bmatrix}\to \begin{bmatrix}
        \cK_1 \\
        \cK\ominus \cK_1
    \end{bmatrix}.
    \end{align} Thus we have obtained the promised intertwining \eqref{IsoExt''}. It is clear from the matrix representation of $U_1$ in \eqref{U1} and the matrix representation of $U_1'$ in \eqref{U1prime} that $U_1$ extends $V_1$.  To see that the isometry $\widetilde{V_2}$ also extends $V_2$, let us note from the matrix representations in \eqref{Aux3} and \eqref{wideTilde} that $\widetilde{V_2}$ has the matrix structure given by
    \begin{align*}
     \widetilde V_2=\begin{bmatrix}
        V_2 & * & 0\\
        0& * & 0\\
        A_{31} & A_{32} & A_{33}
    \end{bmatrix}: \begin{bmatrix}
        \cH \\
        \cK_1\ominus \cH\\
        \cK\ominus \cK_1
    \end{bmatrix}\to  \begin{bmatrix}
        \cH \\
        \cK_1\ominus \cH\\
        \cK\ominus \cK_1
    \end{bmatrix}.
    \end{align*}Since $V_2$ is already an isometry, the only operator $A_{31}$ so that $\widetilde{V_2}$ can be a contraction is the zero operator.
\end{proof}

To obtain a full unitary dilation theorem, we need to extend a unitary/isometry pair $(U,V)$ satisfying an inner covariance relation to a unitary pair satisfying the same relation. We use the Borel functional calculus of a unitary operator -- see e.g., \cite[Chapter IX]{MR768926}.

We also need a Borel right inverse (also known as a Borel section) of a continuous function. A continuous surjective map between two compact spaces may not have a continuous section. For an example, the map $f:\mathbb T\to \mathbb T$ defined by $f(z)=z^n$ for some $n\ge 2$ does not have a continuous section. However, as the following theorem ensures, we can always get a Borel section (see \cite[Theorem 4.2]{MR226684}, or \cite[Theorem I.16]{MR1775825}).
\begin{theorem}(Federer and Morse)\label{T:Borelsection}
Let $X$ and $Y$ be two compact metric spaces and $f$ is a continuous map from $X$ onto $Y$. Then there is a Borel set $B\subseteq X$ such that $f$ restricted to $B$ is one-to-one and onto. Moreover, the inverse of the restriction is a Borel section of $f$.
\end{theorem}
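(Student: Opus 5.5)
The statement is the Federer--Morse Borel section theorem. For $X,Y$ compact metric and $f:X\to Y$ continuous and onto, we need a Borel set $B\subseteq X$ with $f|_B$ a bijection onto $Y$, and with $(f|_B)^{-1}$ Borel measurable. The plan is the classical ``lexicographically least preimage'' construction. It realizes the inverse as a limit of Borel maps and then takes $B$ to be its range.

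\emph{Step 1: reduce to a Cantor-set parametrization.} Fix a continuous surjection $\pi:\mathcal C\to X$ from the Cantor space $\mathcal C=\{0,1\}^{\bN}$; every nonempty compact metric space admits one. Then $F:=f\circ\pi:\mathcal C\to Y$ is a continuous surjection. For each $y\in Y$ the fibre $F^{-1}(y)$ is a nonempty closed subset of $\mathcal C$, so it has a lexicographically least element $\sigma(y)$. This uses only compactness: choose the first coordinate minimal among points of the fibre, then the second, and so on, using closedness at each stage.

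\emph{Step 2: $\sigma$ is Borel.} It suffices to show that each coordinate map $y\mapsto\sigma(y)_n$ is Borel. For a finite word $w$, let $[w]$ be the corresponding cylinder; it is clopen, so $F([w])$ is compact in $Y$. One checks by induction on $n$ that the condition $\sigma(y)\in[w]$ for $w=w_1\cdots w_n$ can be written as
$$y\in F([w])\setminus\bigcup\{F([w']): w' \text{ of length } n,\ w'<_{\rm lex} w\}.$$
This is a finite Boolean combination of compact sets, hence Borel. So $\sigma:Y\to\mathcal C$ is Borel, and $s:=\pi\circ\sigma$ is a Borel section of $f$, since $f(s(y))=F(\sigma(y))=y$.

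\emph{Step 3: the set $B$.} Put $B:=s(Y)$. Since $f\circ s=\mathrm{id}_Y$, the map $s$ is injective and $f|_B$ is a bijection $B\to Y$ with inverse $s$. The remaining point is that $B$ is Borel. A Borel injection between Polish spaces has Borel range (Lusin--Souslin). More directly,
$$B=\{x\in X: s(f(x))=x\},$$
which is the preimage of the diagonal (a closed set) under the Borel map $x\mapsto(s(f(x)),x)$, hence Borel. This direct description avoids Lusin--Souslin entirely. Finally, $(f|_B)^{-1}=s$ is Borel, which is the ``moreover'' clause.

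The main obstacle is Step 2: getting the induction exactly right so that ``least element lies in $[w]$'' is expressed through images of cylinders. This is where compactness, via the closedness of the $F([w])$, is essential. The rest is bookkeeping.
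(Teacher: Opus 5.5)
Your proof is correct. The paper gives no argument for this statement: it quotes the result from Federer--Morse and the cited texts and uses it only as a black box in Proposition~\ref{P:LongWait}. So your proposal supplies a self-contained proof rather than competing with one in the paper. Your route is the standard ``leftmost point'' selection:
\begin{itemize}
\item pull back to Cantor space via the Alexandroff--Hausdorff surjection;
\item pick the lexicographically least point of each compact fibre;
\item detect the event $\sigma(y)\in[w]$ through the compact sets $F([w])$.
\end{itemize}
Your description $B=\{x\in X: s(f(x))=x\}$ is a genuine economy: it gives Borelness of $B$ directly, rather than through the Lusin--Souslin theorem on injective Borel images. Note also that Proposition~\ref{P:LongWait} only needs a Borel map $g$ with $f\circ g=\mathrm{id}$, which is exactly your Step~2. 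The set $B$ plays no role there.

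Two small points of presentation. First, the characterization in Step~2 needs no induction. If some $x\in F^{-1}(y)$ had $x|_n<_{\mathrm{lex}}\sigma(y)|_n$ (restriction to the first $n$ coordinates), then $x<_{\mathrm{lex}}\sigma(y)$, contradicting minimality. Hence $\sigma(y)|_n$ is simply the lexicographically least word $w$ of length $n$ with $y\in F([w])$. Second, in Step~3 the Borel measurability of $x\mapsto (s(f(x)),x)$ into $X\times X$ uses second countability of $X$: the Borel $\sigma$-algebra of $X\times X$ is then generated by rectangles $U\times V$ of basic open sets. Equivalently, $X\setminus B$ is the countable union of the sets $(s\circ f)^{-1}(U)\cap V$ over disjoint basic open sets $U,V$.
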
 We shall use this in what follows.
\begin{proposition}\label{P:LongWait}
    Let $(U,V)$ be a unitary/isometry pair acting on $\cH$ such that 
    \begin{align}\label{Aux5}
    UV=Vf(U)    
    \end{align}
    for some surjective Borel function $f$ on $\bT$, then there is a joint unitary extension $(U_1,U_2)$ of $(U,V)$ such that $$
    U_1U_2=U_2f(U_1).
    $$Moreover, $U_2$ can be chosen to be the minimal unitary extension of $V$.
\end{proposition}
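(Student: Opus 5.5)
Let $U_2$ on $\cK\supset\cH$ be the minimal unitary extension of $V$, so $\cK=\bigvee_{n\geq0}U_2^{*n}\cH$. We would define $U_1$ on $\cK$ by transporting $U$ backwards along $U_2^*$. The relation $U_1U_2=U_2f(U_1)$ is equivalent to
\[
U_1=U_2f(U_1)U_2^*,
\]
so on the piece $U_2^{*n}\cH$ we want $U_1$ to act like ``$f^{-1}$ applied $n$ times'' to $U$, conjugated by $U_2^{*n}$. The Borel section from Theorem~\ref{T:Borelsection} is what makes such an ``$f^{-1}$'' available.

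Concretely, we use the Wold-type decomposition $\cK=\cH\oplus\bigoplus_{n\geq1}U_2^{*n}\cL$, where $\cL=\cH\ominus V\cH$ is the wandering subspace. A unitary extension of $V$ is unitarily equivalent to $V$ on $\cH$ together with the bilateral backward shift on copies of $\cL$.

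First we check that $\cL$ reduces $U$. From \eqref{Aux5}, $V\cH$ is invariant under $U$ because $UV=Vf(U)$. Applying the same argument to $U^*$, using $U^*V=V\overline{f}(U)$ (which follows from \eqref{Aux5} and unitarity via Borel functional calculus), $V\cH$ also reduces $U$. Hence $\cL$ reduces $U$.

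Let $\sigma$ be a Borel section of $f$, so that $f\circ\sigma=\mathrm{id}$ on $\bT$, and set $W=\sigma(U|_\cL)$; this is a unitary on $\cL$ by Borel functional calculus. Writing $\cK=\cH\oplus\ell^2(\bN,\cL)$, with $U_2$ acting as $V$ on $\cH$, sending the first $\cL$-copy into $\cL\subset\cH$, and shifting down otherwise, we define
\[
U_1=U\oplus\operatorname{diag}\big(\sigma^{\circ n}(U|_\cL)\big)_{n\geq1}.
\]
This is unitary and extends $U$. We then verify $U_1U_2=U_2f(U_1)$ block by block. On $\cH$ the identity is \eqref{Aux5}. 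For the first $\cL$-copy, the $n=1$ block, we need $U|_\cL=f(\sigma(U|_\cL))$, which is exactly $f\circ\sigma=\mathrm{id}$. For $n\geq2$ we need $\sigma^{\circ(n-1)}(U|_\cL)=f(\sigma^{\circ n}(U|_\cL))$, which again follows from $f\circ\sigma=\mathrm{id}$ together with the composition rule of Borel functional calculus, $g(h(N))=(g\circ h)(N)$ for unitary $N$ and Borel $h:\bT\to\bT$.

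\textbf{Main obstacle.} The delicate points are the justifications on the functional-calculus side. First, $f(U)$ must be meaningful and coincide with the Borel calculus. Second, the identity $U^*V=V\overline{f}(U)$ must be derived rigorously. For this we would use that $UV=Vf(U)$ gives $p(U)V=Vp(f(U))$ for polynomials $p$. Taking adjoints then gives $V^*\overline{p}(U^*)\ldots$. More efficiently, we note that $V^*UV=f(U)$ is unitary and $V$ is an isometry, so $\|UVh\|=\|Vf(U)h\|$ forces $UV\cH\subset V\cH$ with equality, because $f(U)$ is onto. This makes $V\cH$ reducing for the unitary $U$. Third, the minimal unitary extension really has the block form described above; this is the standard Wold decomposition.
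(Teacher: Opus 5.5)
Your proposal is correct and is essentially the paper's proof. The paper also realizes the minimal unitary extension $U_2$ on $\cH\oplus\cD_{V^*}\oplus\cD_{V^*}\oplus\cdots$; your $\cL=\cH\ominus V\cH$ is exactly $\cD_{V^*}$. It sets $U_1=\operatorname{diag}(U,W_1,W_2,\dots)$ with $W_k=g^{\circ k}(U|_{\cD_{V^*}})$ for a Borel section $g$ of $f$, and verifies $U_1U_2=U_2f(U_1)$ block by block via $f(W_k)=W_{k-1}$. The only cosmetic difference is how reducibility is shown: the paper computes $U(I-VV^*)=U-Vf(U)V^*=(I-VV^*)U$ using $V^*U=f(U)V^*$, whereas you argue through $UV\cH=V\cH$.
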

\begin{proof}
    Since $U$ is unitary, $f(U)$ is also unitary and so the intertwining \eqref{Aux5} is the same as $U^*V=Vf(U)^*$. These two intertwining relations show that 
    $$
    UD_{V^*}=U(I-VV^*)=U-Vf(U)V^*=U-VV^*U=D_{V^*}U
    $$and so $\cD_{V^*}$ is reducing under $U$. Define
    $$
    W_0=U|_{\cD_{V^*}}=\int_\bT \zeta\;dE(\zeta),
    $$where $E$ is the spectral measure for the unitary operator $W_0$. By Theorem \ref{T:Borelsection}, there is a Borel section $g:\bT\to\bT$ of $f$. For every $k\geq 1$, define
    the unitary operators $W_k$ on $\cD_{V^*}$ by $$
    W_k=\int_\bT g^{\circ k}(\zeta)\;dE(\zeta).
    $$It then follows that
    \begin{align}\label{Aux6}
    UD_{V^*}=D_{V^*}W_0
    \end{align} and by the Borel functional calculus for unitary operators, we have
    \begin{align}\label{Aux7}
    f(W_k)=W_{k-1} \quad\mbox{for every }k\geq 1.
    \end{align} Set unitary operators $(U_1,U_2)$ on $\cK=\cH\oplus \cD_{V^*}\oplus \cD_{V^*}\oplus \cdots$ as follows:
    \begin{align*}
        U_1 = \begin{bmatrix}
            U&&  &  & \\
             &  W_1&   &  &\\
             &  &  W_2 &  &  \\
             &  &  &  \ddots &  \\        
        \end{bmatrix}\quad\mbox{and}\quad
        U_2=\begin{bmatrix}
            V&D_{V^*}&  &  & \\
             &  &  I_{\cD_{V^*}} &  &\\
             &  &  &  I_{\cD_{V^*}} &  \\
             &  &  &  & \ddots \\        
        \end{bmatrix},
    \end{align*}where all the unspecified entries contain the zero operator. It is clear that $U_2^*$ is the minimal isometric lift of $V^*$ and therefore $U_2$ is the minimal unitary extension of $V$. We compute
    $$
    U_1U_2=\begin{bmatrix}
            UV&UD_{V^*}&  &  & \\
             &  & W_1 &  &\\
             &  &  &  W_2 &  \\
             &  &  &  &  \ddots\\        
        \end{bmatrix}
    $$whereas
    $$
    U_2f(U_1)=
    \begin{bmatrix}
        Vf(U) & D_{V^*}f(W_1) & & \\
        & & f(W_2) & \\
        & & & f(W_3) \\
        & & & & \ddots
    \end{bmatrix}
    $$What remains is to see from the above matrix computation that $U_1U_2=U_2f(U_1)$ by the relations \eqref{Aux6} and \eqref{Aux7}. 
\end{proof}
Every finite Blaschke factor satisfies all the hypothesis imposed on the function $f$ as in Proposition \ref{P:LongWait}. As a consequence of Propositions \ref{P:IsoExt} and \ref{P:LongWait} we get the following result.
\begin{theorem}\label{T:UniExtIso}
    If $(V_1,V_2)$ is an isometric pair such that 
    $$
    V_1V_2=V_2f(V_1),
    $$for some inner function $f$, then there is a unitary extension $(U_1,U_2)$ of $(V_1,V_2)$ that satisfies the same relation.
\end{theorem}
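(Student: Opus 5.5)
The proof is a two-step composition of Proposition \ref{P:IsoExt} and Proposition \ref{P:LongWait}.

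\emph{Step 1.} Start with the isometric pair $(V_1,V_2)$ on $\cH$ satisfying $V_1V_2=V_2f(V_1)$. Proposition \ref{P:IsoExt} gives a Hilbert space $\cK_1\supset\cH$ and a unitary/isometry pair $(U_1',\widetilde V_2)$ on $\cK_1$ that extends $(V_1,V_2)$ and satisfies $U_1'\widetilde V_2=\widetilde V_2 f(U_1')$.

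\emph{Step 2.} Apply Proposition \ref{P:LongWait} to the pair $(U,V)=(U_1',\widetilde V_2)$. This needs $f$ to be a surjective Borel function on $\bT$. For an inner $f$, $f(U_1')$ means the Borel functional calculus applied to the radial boundary values of $f$. That calculus agrees with the $H^\infty$ calculus here because the spectral measure of a unitary that extends a pure isometry part is absolutely continuous, or more simply because the covariance relation is stated via the $H^\infty$ calculus, which coincides with the Borel calculus for unitaries. The proposition then produces a unitary pair $(U_1,U_2)$ on $\cK\supset\cK_1$ extending $(U_1',\widetilde V_2)$ with $U_1U_2=U_2f(U_1)$. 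Since extensions compose, $(U_1,U_2)$ extends $(V_1,V_2)$.

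\emph{Main obstacle.} The only delicate point is the hypothesis of Proposition \ref{P:LongWait}. The paper notes it holds for finite Blaschke products: they are continuous surjections $\bT\to\bT$, so Theorem \ref{T:Borelsection} supplies a Borel section. For a general inner $f$, the boundary function is only defined almost everywhere. I would handle this as follows.
\begin{enumerate}
\item Redefine $f$ on a null set so that it is a Borel map $\bT\to\bT$.
\item Show its essential range is all of $\bT$. A nonconstant inner function has boundary values covering $\bT$ up to a null set, by Lindel\"of/Frostman-type results.
\item Ensure that modifying $f$ on a null set does not change $f(W)$ for the unitaries $W$ in question.
\end{enumerate}
For item 3, note that the unitaries $W_k$ are built from the spectral measure of $U_1'|_{\cD_{\widetilde V_2^*}}$, so one needs that measure to give no weight to the exceptional set. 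If this cannot be guaranteed in general, I would restrict the statement to $f$ a finite Blaschke product, which is the setting used in Theorems \textbf{A}, \textbf{B} and \textbf{D}. Alternatively, I would replace the Borel section argument by a measurable selection adapted to the spectral measure class, choosing $g$ with $f\circ g=\mathrm{id}$ almost everywhere with respect to that measure class.
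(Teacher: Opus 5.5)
Your argument is the paper's proof. Theorem~\ref{T:UniExtIso} is obtained there exactly by composing Proposition~\ref{P:IsoExt} with Proposition~\ref{P:LongWait}, and the paper checks the hypothesis of the latter only for finite Blaschke products, consistent with its standing use of $f\in A(\bD)$ in Lemma~\ref{L:f-consequences} and Proposition~\ref{P:BabyStep}. So your fallback to nonconstant finite Blaschke products is precisely the paper's effective scope; note, though, that your spectral-measure-adapted section is not a proof for general inner $f$, since it may force the $W_k$ to be singular, where $f(W_k)$ is not canonically defined, and that unimodular constants must be excluded: for $f\equiv c$ the statement fails (take $V_2=M_z$ on $H^2$ and $V_1$ equal to $c$ on $zH^2$ and to some $d\neq c$ on the constants).
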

Theorems \ref{T:UniExtIso} and \ref{T:Lift1} together imply the following unitary dilation result. 
\begin{theorem}\label{T:UniDil}
Suppose $(T_1,T_2)$ is a contractive pair acting on $\cH$ satisfying the covariance relation $T_1T_2=T_2f(T_1)$ for a Blaschke function $f$. Then there is a Hilbert space $\cK\supset\cH$, a unitary pair $(U_1,U_2)$ on $\cK$ such that 
\begin{enumerate}
    \item $\cH$ is semi-invariant under $(U_1,U_2)$,
    \item $P_\cH U_j|_\cH=T_j$ for $j=1,2$, 
    \item $U_1U_2=U_2f(U_1)$.
\end{enumerate}
\end{theorem}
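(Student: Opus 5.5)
The plan is to compose the two earlier results and then check that compressing a unitary extension of an isometric lift returns the original pair. First, since a Blaschke function is inner, Theorem \ref{T:Lift1} applies to $(T_1,T_2)$. It gives a Hilbert space $\cK_+\supset\cH$ and an isometric lift $(V_1,V_2)$ on $\cK_+$ with $V_1V_2=V_2f(V_1)$. Being a lift means that $\cH$ is invariant under $(V_1^*,V_2^*)$ and that $V_j^*|_\cH=T_j^*$.

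Next, Theorem \ref{T:UniExtIso} applies to the isometric pair $(V_1,V_2)$. It yields $\cK\supset\cK_+$ and a unitary pair $(U_1,U_2)$ on $\cK$ that extends $(V_1,V_2)$ and satisfies $U_1U_2=U_2f(U_1)$. This gives item (3) directly. The hypotheses of Proposition \ref{P:LongWait}, which feeds Theorem \ref{T:UniExtIso}, are met because a finite Blaschke product induces a continuous surjection of $\bT$; the paper notes this before Theorem \ref{T:UniExtIso}.

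Finally, for items (1) and (2) I would show that $\cH$ is semi-invariant, exhibiting it as the difference of two $(U_1,U_2)$-invariant subspaces:
$$
\cH=\cK_+\ominus(\cK_+\ominus\cH).
$$
\begin{itemize}
\item $\cK_+$ is jointly invariant under $(U_1,U_2)$ because $U_j|_{\cK_+}=V_j$.
\item $\cK_+\ominus\cH$ is invariant under each $V_j$, since $\cH$ is $V_j^*$-invariant. It is therefore invariant under each $U_j$.
\end{itemize}
With respect to $\cK=(\cK_+\ominus\cH)\oplus\cH\oplus(\cK\ominus\cK_+)$, each $U_j$ is then upper triangular, and its middle diagonal entry is $P_\cH V_j|_\cH$. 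Since $V_j^*|_\cH=T_j^*$, we get $P_\cH V_j|_\cH=(V_j^*|_\cH)^*=T_j$, which gives (2). Semi-invariance also yields $P_\cH U_1^{m}U_2^{n}|_\cH=P_\cH U_1^{m}P_\cH U_2^{n}|_\cH$ and similar word identities, although the statement only needs (1)–(3).

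Almost nothing here is a real obstacle. The only point needing care is checking that an extension of a lift is a dilation in the semi-invariant sense, and the invariance argument above handles it. One should also keep track of the nesting $\cH\subset\cK_+\subset\cK$ and use a single space for the lift, whose notation in the proof of Theorem \ref{T:Lift1} is somewhat loose. Any genuine difficulty is already absorbed into Theorem \ref{T:UniExtIso}.
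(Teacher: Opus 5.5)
Your proposal is correct and matches the paper's proof, which composes Theorem~\ref{T:Lift1} (covariant isometric lift) with Theorem~\ref{T:UniExtIso} (covariant unitary extension, applicable because a non-constant Blaschke product maps $\bT$ continuously onto itself). The paper leaves implicit the check that a unitary extension of a lift compresses to the original pair on a semi-invariant subspace, and you supply it correctly via the invariant subspaces $\cK_+$ and $\cK_+\ominus\cH$.
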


\begin{remark}
Here we remark that analogues of Theorems \ref{T:Lift1} and \ref{T:UniDil} hold true for inner left covariance relation as well. Suppose that $(T_1,T_2)$ is a contractive pair such that $f(T_1)T_2=T_2T_1$. The adjoint relation is $T_1^*T_2^*=T_2^*\widetilde{f}(T_1^*)$ where $\widetilde{f}$ is the reflection of $f$. By Theorem \ref{T:UniDil}, there is a Hilbert space $\cK$, a unitary operator pair $(U_1,U_2)$ such that 
\begin{enumerate}
    \item $\cH$ is semi-invariant under $(U_1,U_2)$, 
    \item $P_\cH(U_1,U_2)|_\cH=(T_1,T_2)$ and 
    \item $U_1^*U_2^*=U_2^*\widetilde{f}(U_1^*)$ or equivalently, $f(U_1)U_2=U_2U_1$.
\end{enumerate}Semi-invariance means that there are closed orthogonal subspaces $\cM,\cN$ of $\cK$ such that $\cK=\cM\oplus\cH\oplus\cN$, the spaces $\cM$ and $\cK_+:=\cM\oplus\cH$ are $(U_1,U_2)$-invariant, and with the isometric pair 
$$
(V_1,V_2)=(U_1,U_2)|_{\cK_+},
$$we have $V_j^*|_\cH=T_j^*$ for each $j=1,2$. It is trivial to check from the given information that $(V_1,V_2)$ also satisfies the left inner covariance relation $f(V_1)V_2=V_2V_1$.
\end{remark}

\section{The non-commutative polydisk as a complete spectral set}
In the classical setting, the matrix-valued von Neumann inequality generally fails for commuting contractive $d$-tuples when $d \ge 3$ -- see \cite{MR268710, MR355642}. However, it is not difficult to see that if a commuting contractive tuple $\bm T$ lifts to a commuting isometric tuple $\bm V$ annihilated by a polynomial $p$, the inequality readily holds for $\bm T$ over the algebraic variety $\mathcal Z(p)\cap\bD^d$. In this section we show that non-commutative function theory presents a striking dichotomy: while the matrix-valued von Neumann inequality holds universally for contractive tuples of arbitrary finite length over the free polydisk $\bD^{d,\rm NC}$ as defined below in \eqref{FreeAnalysis}, it fails over the non-commutative inner variety $\cV_f^{\rm NC}$ as defined below in \eqref{InnerVariety} -- even when the underlying operator pair admits a covariant isometric lifting.

A background in non-commutative function theory is not strictly required to understand the results presented here. We omit these preliminary concepts, but direct interested readers to \cite{MR4411370, MR4346386, MR4814008} and the references therein.

Consider the non-commutative (NC) polydisk 
\begin{align}\label{FreeAnalysis}
\overline{\bD}^{d, \rm NC}: =\bigcup_{n=1}^\infty \left\{(X_1,X_2,\dots,X_d)\in M_n(\bC)^d:\|X_j\|\leq 1\mbox{ for }j=1,2,\dots,d\right\}.
\end{align} The following theorem shows that $\overline{\bD}^{d, \rm NC}$ is a complete spectral set, i.e., for every matrix-valued NC polynomial
$$
P=\begin{bmatrix}
    p_{ij}
\end{bmatrix}_{i,j=1}^N,
$$where $p_{ij}\in \bC\langle z_1,z_2,\cdots,z_d\rangle$ are scalar-valued NC polynomials, we have
\begin{align}\label{CompleteIneq}
\|P(T_1,T_2,\cdots,T_d)\|_{\cB(\cH\otimes \bC^N)} \leq \sup_{\bD^{\rm d, NC}} \|P(z_1,z_2,\dots,z_d)\|.
\end{align}

\begin{theorem}\label{T:CompleteSpec}
Every contractive operator tuple $(T_1,T_2,\dots,T_d)$ has $\overline{\bD}^{d, \rm NC}$ as a complete spectral set.
\end{theorem}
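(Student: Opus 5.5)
The plan is to reduce the operator inequality \eqref{CompleteIneq} to finite-dimensional compressions, where the tuple itself becomes a point of $\overline{\bD}^{d,\rm NC}$. Fix an $N\times N$ matrix-valued NC polynomial $P$ of total degree at most $m$, and fix unit vectors $x,y\in\cH\otimes\bC^N$. Our aim is to show
\[
|\langle P(\bm T)x,y\rangle|\le \sup_{\overline{\bD}^{d,\rm NC}}\|P\|.
\]
The obstacle is that compressing each $T_j$ to a finite-dimensional subspace does not turn $P(\bm T)$ into $P$ of the compressions, because products do not compress multiplicatively. We therefore need the finite-dimensional subspace to be large enough that multiplicativity holds exactly for the vectors under consideration.

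\textbf{Step 1: a word-invariant subspace.} Write $x=\sum_k x_k\otimes e_k$, with $x_k\in\cH$. Let $\cM\subset\cH$ be the span of all vectors $w(\bm T)x_k$, where $w$ ranges over words of length at most $m$ in $d$ letters and $k=1,\dots,N$; in particular the empty word contributes $x_k$ itself. This space is finite dimensional. Set $X_j:=P_\cM T_j|_\cM$. Each $X_j$ is a contraction on $\cM\cong\bC^n$, so $\bm X=(X_1,\dots,X_d)\in\overline{\bD}^{d,\rm NC}$. (The supremum over the NC polydisk is understood at all matrix levels; since the disjoint union includes every level $n$, nothing extra is needed here.)

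\textbf{Step 2: exact compression of words.} Next we verify that for every word $w$ of length at most $m$ and every $k$,
\[
w(\bm X)x_k=P_\cM w(\bm T)x_k.
\]
The argument is by induction on the length of the word, read from the right. Suppose $w=z_j w'$. By induction $w'(\bm X)x_k=P_\cM w'(\bm T)x_k$, and this equals $w'(\bm T)x_k$ because that vector already lies in $\cM$. Applying $X_j=P_\cM T_j|_\cM$ then gives $P_\cM T_j w'(\bm T)x_k$. Since $T_jw'(\bm T)x_k$ itself lies in $\cM$, the projection acts trivially, and we obtain $w(\bm T)x_k$. Summing over the entries of $P$ yields
\[
P(\bm X)x=(P_\cM\otimes I_N)P(\bm T)x=P(\bm T)x.
\]

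\textbf{Step 3: conclusion.} From Step 2,
\[
|\langle P(\bm T)x,y\rangle|=|\langle P(\bm X)x,(P_\cM\otimes I)y\rangle|\le\|P(\bm X)\|\le \sup_{\overline{\bD}^{d,\rm NC}}\|P\|.
\]
Taking the supremum over unit vectors $x$ and $y$ gives \eqref{CompleteIneq}.

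An alternative route would dilate $\bm T$ to a tuple of unitaries via free (noncommuting) Sz.-Nagy dilations, for instance by placing the Sch\"affer dilations of the individual $T_j$ on a common space. One would then approximate the unitary tuple by matrices, which is harder. The compression argument avoids any dilation. The only delicate point is Step 2, and specifically choosing $\cM$ closed under applying words to $x$, rather than under the $T_j$ themselves, which would not give a finite-dimensional space.
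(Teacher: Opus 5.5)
Your argument is correct, and it differs from the paper's in a useful way. Both proofs compress $\bm T$ to finite-dimensional subspaces, where the compressed tuple automatically lies in $\overline{\bD}^{d,\rm NC}$.

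\begin{itemize}
\item The paper works with an increasing sequence $\cH_n=\bigvee\{e_1,\dots,e_n\}$. It proves the approximate statement $P(\bm X_n)v_n\to P(\bm T)v$, first for single letters and then for words by induction, and passes the norm bound to the limit.
\item You choose one subspace $\cM$, spanned by all words of length at most $\deg P$ applied to the components $x_k$. On the relevant vectors, compression to $\cM$ is exactly multiplicative. So $P(\bm X)x=P(\bm T)x$ holds identically and no limit is taken.
\end{itemize}

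Your version is purely algebraic, needs no separability, and treats vectors in $\cH\otimes\bC^N$ directly. The paper's proof, by contrast, fixes $v\in\cH$ rather than in $\cH\otimes\bC^N$. It also tacitly requires $\bigcup_n\cH_n$ to be dense in a subspace containing every $w(\bm T)v$, since otherwise the step $P_{\cH_n}T_jh\to T_jh$ fails. Your choice of $\cM$ makes both issues disappear.

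What the paper's route offers in return is a stronger approximation statement than this theorem needs. When the $\cH_n$ exhaust a separable $\cH$, it gives a single sequence of matrix tuples, independent of $P$, whose NC polynomial calculi converge strongly to that of $\bm T$.
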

\begin{proof}
Pick a vector $v$ in $\cH$ of norm $1$. Then there is a countable set of orthonormal vectors $\{e_j:j\geq1\}$ so that $v=\sum c_j e_j$. Let $\cH_n=\bigvee\{e_1,e_2,\dots,e_n\}$ and $v_n=\sum_{j=1}^nc_je_j$. Now consider the matrix tuple
$$
(X_{1,n},X_{2,n},\cdots ,X_{d,n}) = P_{\cH_n}(T_1,T_2,\cdots,T_d)|_{\cH_n}.
$$We show that $P(X_{1,n},X_{2,n},\cdots,X_{d,n})v_n\to P(T_1,T_2,\cdots,T_d)v$. Note that once this limit is established, the inequality will follow. The limit will be established in two steps:
\medskip

\noindent
{\sf Step 1:} For each $j=1,2,\cdots,d$ and $\ell \geq 1$, $(X_{j,n})^\ell \to T_j^\ell$ as $n \to\infty$.

Let us note that $\|X_{j,n}\|\leq 1$ for each $j=1,2$ and $n\geq1$. Therefore if $\{h_n\}\subset\cH_n$ is any sequence of vectors in $\cH$ such that $h_n\to h$, then $X_{j,n}h_n \to T_jh$. This can be seen from the following computation:
\begin{align*}
    \|X_{j,n}h_n - T_jh\|& \leq \|X_{j,n}h_n - P_{\cH_n}T_jh \| + \|P_{\cH_n}T_jh  -  T_jh\|\\& \leq \|P_{\cH_n}T_j(h_n - h)\| +\|P_{\cH_n} (T_jh)-T_jh\|.
\end{align*}
Proceeding by induction we get the desired result of step $1$.
\medskip

\noindent
{\sf Step 2:} It is easy to see that $X_{j,n}X_{k,n}h_n\to T_jT_kh$ for all $j,k\in \{1,2,\cdots,d\}$. Now for any word $w(X_{1,n},X_{2,n},\cdots ,X_{d,n})=Z_1Z_2\cdots Z_m$ of length $m$, where $Z_i\in \{X_{1,n},X_{2,n},\cdots ,X_{d,n}\}$ we use induction on $m$ to show that $$w(X_{1,n},X_{2,n},\cdots ,X_{d,n})v_n\to (T_1,T_2,\cdots,T_d)v.$$
This completes the proof.
\end{proof}
We show that despite the dilation Theorem \ref{T:UniDil} and the NC polydisk $\overline{\bD}^{d,\rm NC}$ being a complete spectral set for every contractive tuple, it is not true for a contractive pair $(T_1,T_2)$ satisfying an inner covariance relation $T_1T_2=T_2f(T_1)$ to have the NC inner variety $\cV_f^{\rm NC}$ defined as
\begin{align}\label{InnerVariety}
=\bigcup_{n=1}^\infty\{(X_1,X_2)\in M_n(\bC)^2:X_1X_2=X_2f(X_1)\mbox{ and } \|X_j\|\leq 1\mbox{ for }j=1,2\}
\end{align}as a complete spectral set. We demonstrate this fact by a concrete example below.
\begin{example}\label{E:NotComplete}
We take the inner function $f$ to be the irrational rotation $f(z)=e^{2\pi i\theta}z$ where $\theta\in\bR\setminus \mathbb Q$.
Suppose $(X_1,X_2)$ is an $N\times N$ matrix pair satisfying $X_1X_2=e^{2\pi i\theta}X_2X_1$. We first prove that
\medskip

\noindent
{\sf Claim:} $X=X_1X_2$ must be nilpotent. 

To see this, note that
$$
XX_2=X_1X_2X_2=qX_2X_1X_2=qX_2X
$$and so by an inductive argument, $X^\ell X_2=q^{\ell}X_2X^\ell$ for every $\ell \geq 1$. We use this to note that for every $\ell \geq 1$,
$$
\operatorname{Tr} X^\ell = \operatorname{Tr}(X^{\ell-1}X_1X_2) =
\operatorname{Tr}(X_2X^{\ell -1}X_1) = 
\overline{q}^{\ell -1}\operatorname{Tr}(X^{\ell -1}X_2X_1)
= \overline{q}^\ell \operatorname{Tr}X^\ell.
$$Since $\theta$ is irrational, we must have 
$$
\operatorname{Tr}(X^\ell) = 0 \quad\mbox{for every }\ell \geq 1.
$$This shows that $X$ is nilpotent.

We show that any $q$-commuting unitary pair $(U_1,U_2)$ such that the unitary operator $U=U_1U_2$ has $-1$ in its spectrum cannot have $\cV_f^{\rm NC}$ as a complete spectral set. Examples of such unitary pairs include $(R_q, M_\zeta)$ on $L^2(\bT)$ or $(M_{\zeta_1}R_q^{(2)},R_{\overline q}^{(2)}M_{\zeta_2})$ on $L^2(\bT\times \bT)$ where $R_q^{(2)}$ is the rotation operator defined by
$$
R_q^{(2)}: \zeta_1^m\zeta_2^n \mapsto q^{\min\{m,n\}}\zeta_1^m\zeta_2^n \quad \mbox{ for every integers }m,n.
$$
Consider the polynomial
$$
P(z_1,z_2) = \begin{bmatrix}
    z_1z_2 & 0\\
    1-z_1z_2 & 0
\end{bmatrix}.
$$Then
$$
\|P(U_1,U_2)\|^2 = \|P(U_1,U_2)^*P(U_1,U_2)\| =  \|I + (I-U)^*(I-U)\| = 
\|3I - U - U^*\|.
$$The full spectrum assumption $\sigma(U) =\bT$ together with the Gelfand theory gives
$$
\|P(U_1,U_2)\|^2 = \sup_{\zeta\in\bT} (3 -\zeta - \overline{\zeta}) = 5.
$$ On the other hand, for any $N\times N$ matrix pair $(X_1,X_2)$ such that $X=X_1X_2$ is nilpotent, we have
$$
\|P(X_1,X_2)\|^2 = \|X^*X\| + \|(I-X)^*(I-X)\| \leq 1 + \|I-X\|^2.
$$We now prove that $\|I-X\|<2$ for any nilpotent matrix $X$. Once we show this, we can conclude that the chosen unitary pair $(U_1,U_2)$ does not have $\cV_{f}^{\rm NC}$ as a complete spectral set. It is known (see \cite{MR1072339}) that if $X$ is a nilpotent contraction and if $\ell$ is the least positive integer such that $X^\ell = 0$, then the numerical radius of $X$, $w(X)$ must satisfy
$
w(X) \leq \cos\left( \pi/(\ell + 1)\right).
$
Since $\cos\left(\pi/(\ell + 1)\right)<1$ for every $\ell\geq 1$, the claim is established.

\end{example}

\section{Dynamic interpolation/extension theorem}
Let $f$ be a holomorphic self-map of $\bD$. A point $z\in \bD$ is called a \textit{periodic point} of $f$ if there exists a positive integer $n_z$ such that $f^{\circ n_z}(z)=z$, and the least positive integer is called the \textit{period} of $z$. If there is no such positive integer, we use the convention that $n_z=\infty$. In the theorem below, $\bC^\infty = \ell^2$, the Hilbert space of square summable sequences.
\begin{theorem}\label{T:BoundExt}
Let $\bCS\subset \bD$ and $g:\bCS\to\bC$ be a function for which there is a Blaschke function $f$ such that $\bCS\supset \{\mathcal{O}_f(z):z\in \bCS\}$ and 
$$
g(f(z)) = g(z) \quad\mbox{for all }z\in\bCS.
$$ Then $g$ has a bounded holomorphic extension on $\bD$ if and only if there exists $M>0$ such that the matrix-valued function $\Delta:(z,w)\to \cB(\bC^{n_z},\bC^{n_w})$ on $\bCS\times \bCS$ defined by
$$
\Delta(z,w) = \begin{bmatrix}
    \dfrac{M^2\alpha^2}{1-f^{\circ j}(z)\overline{f^{\circ l}(w)}}-\dfrac{g(z)\overline{g(w)}}{1-f^{\circ j+1}(z)\overline{f^{\circ l+1}(w)}}
\end{bmatrix}_{j,l=(0,0)}^{(n_z-1, n_w-1)}
$$ is a positive semi-definite kernel where 
$\alpha^2=(1+|f(0)|)(1-|f(0)|)^{-1}.$ 
\end{theorem}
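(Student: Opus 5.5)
The plan follows Sarason's approach, using the reproducing kernels $k_w(\lambda)=(1-\lambda\overline{w})^{-1}$ of $H^2$. The key observations are
$$C_f^*k_w=k_{f(w)},\qquad M_z^*k_w=\overline{w}\,k_w,\qquad (M_\varphi C_f)^*k_w=\overline{\varphi(w)}\,k_{f(w)}.$$
Since $\bCS$ contains the forward orbit of each of its points and $g$ is constant along orbits, we have $g(f^{\circ j}(z))=g(z)$. Hence the $(j,l)$ entry of $\Delta(z,w)$ equals
$$M^2\alpha^2\langle k_{f^{\circ l}(w)},k_{f^{\circ j}(z)}\rangle-\big\langle \overline{g(f^{\circ l}(w))}\,k_{f^{\circ l+1}(w)},\ \overline{g(f^{\circ j}(z))}\,k_{f^{\circ j+1}(z)}\big\rangle .$$
So positivity of $\Delta$ is exactly the statement that the map
$$X^*:k_u\mapsto\overline{g(u)}\,k_{f(u)},\qquad u\in\mathcal{O}_f(z),\ z\in\bCS,$$
extends to a well-defined operator of norm at most $M\alpha$ on
$$\cN:=\bigvee\{k_u: u\in \mathcal{O}_f(z),\ z\in\bCS\}.$$
The blocks of size $n_z$ (or $\ell^2$ when $n_z=\infty$) are just a bookkeeping device that lists each orbit point once. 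Well-definedness comes from the standard argument: distinct kernels are linearly independent, and $\|X^*\sum c_ik_{u_i}\|^2\le M^2\alpha^2\|\sum c_ik_{u_i}\|^2$ forces any relation to be respected.

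For the forward direction, let $G\in H^\infty$ extend $g$ with $\|G\|_\infty\le M$, and put $A=M_GC_f$. I will use the standard norm bound for composition operators on $H^2$,
$$\|C_f\|^2\le \frac{1+|f(0)|}{1-|f(0)|}=\alpha^2 .$$
This gives $\|A\|\le M\alpha$, hence $M^2\alpha^2 I-AA^*\ge0$. Testing this inequality against finite combinations of kernels at the orbit points, and using $A^*k_u=\overline{G(u)}k_{f(u)}=\overline{g(u)}k_{f(u)}$, yields positivity of $\Delta$ on $\bCS\times\bCS$.

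For the converse, take $X$ on $\cN$ as above. The space $\cN$ is invariant under $M_z^*$, so $M_z$ is an isometric lift of $T:=P_\cN M_z|_\cN$, and $T^*k_u=\overline{u}\,k_u$. On kernels we compute
$$X^*f(T)^*k_u=\overline{f(u)}\,\overline{g(u)}\,k_{f(u)}=T^*X^*k_u .$$
Hence $f(T)X=XT$, which is the left covariance relation \eqref{CovarianceEndo'}. Since $f$ is inner, the left-covariance version of item (1) of Theorem \ref{T:LiftEndo} produces $Y\in\cB(H^2)$ with
$$f(M_z)Y=YM_z,\qquad \|Y\|=\|X\|\le M\alpha,\qquad Y^*|_\cN=X^* .$$
Theorem \ref{T:f-commutant of vector-shift}, applied with $\bCE=\bC$, then gives $Y=M_\varphi C_f$ for some $\varphi\in H^\infty$ with $\|\varphi\|_\infty\le M\alpha\sqrt{\|f'\|_\infty}$. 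Comparing the two descriptions of the adjoint on kernels,
$$\overline{\varphi(u)}\,k_{f(u)}=Y^*k_u=X^*k_u=\overline{g(u)}\,k_{f(u)},$$
so $\varphi=g$ on $\bCS$. Thus $G=\varphi$ is the desired bounded extension.

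I expect the main care to be needed in two places. The first is the forward direction, where one must justify the factor $\alpha^2$ through the composition-operator norm estimate (Littlewood subordination plus the automorphism bound). The second is checking that the block indexing of $\Delta$, including infinite orbits, really captures positivity on all finite kernel combinations over $\cN$, so that $X$ is well defined and contractive up to the factor $M\alpha$.
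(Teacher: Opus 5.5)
Your proposal is correct and follows the paper's proof essentially step for step. In the forward direction you test $M^2\alpha^2 I-AA^*\geq 0$ for $A=M_GC_f$ against kernels at orbit points using $\|C_f\|^2\leq\alpha^2$; the paper compresses $A$ to the span of orbit kernels, which is the same thing. In the converse you define $k_u\mapsto\overline{g(u)}\,k_{f(u)}$ on that span, check the left covariance relation with $T^*=M_z^*|_{\cN}$, lift to $H^2$ via Theorem \ref{T:LiftEndo}, and identify the lift as $M_\varphi C_f$ via Theorem \ref{T:f-commutant of vector-shift}, exactly as the paper does.
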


\begin{proof}
Let us first assume that a bounded holomorphic extension of $g$ exists i.e., there exists $G\in H^{\infty}(\bD)$ such that $\|G\|\le M$ and $G|_{\mathcal S}=g$. Consider the subspace $$\bCG=\bigvee \{k_{f^{\circ{n}}(z)}:n\ge 0 \text{ and } z\in \mathcal{S}\}\subset H^2.$$ Define an operator $D$ on $\bCG$ by $D=P_{\bCG}M_GC_f\mid_{\bCG}$. Then the operator $D^*:\bCG\to\bCG$ is given by 
$$D^*(k_{f^{\circ n}}(z))=\overline{g(z)}k_{f^{\circ n+1}(z)}.$$ It is well known that 
$
\|C_f\|_{H^2}\leq (1+|f(0)|)(1-|f(0)|)^{-1}
$ and $\|M_G\|=\|G\|_{\infty}$. Thus if we denote $M:=\|G\|_\infty$, then
$$
\|D\|^2\le \|G\|_{\infty}^2(1+|f(0)|)(1-|f(0)|)^{-1}= M^2\alpha^2.
$$Hence for any $x=\sum_{i=1}^{n}\sum_{j=0}^{m}\alpha_{ij}k_{f^{\circ j}(z_i)}\in \bCG$, we have 

\begin{align*}0&\le M^2\alpha^2\|x\|^2-\|D^*x\|^2\\&=M^2\alpha^2\|\sum_{i=1}^{n}\sum_{j=0}^{m}\alpha_{ij}k_{f^{\circ j}(z_i)}\|^2-\|\sum_{i=1}^{n}\sum_{j=0}^{m}\alpha_{ij}\overline{g(z_i)}k_{f^{\circ j+1}(z_i)}\|^2
\\&= \Bigg (M^2\alpha^2\sum_{i,k=1}^{n}\sum_{j,l=0}^{m}\alpha_{kl}\overline{\alpha_{ij}}\langle k_{f^{\circ l}(z_k)},k_{f^{\circ j}(z_i)}\rangle\\&-\sum_{i,k=1}^{n}\sum_{j,l=0}^{m}\alpha_{kl}\overline{\alpha_{ij}}\overline{g(z_k)} g(z_i)\langle k_{f^{\circ l+1}(z_k)},k_{f^{\circ j+1}(z_i)}\rangle \Bigg ) \\
&=\sum_{i,k=1}^{n}\sum_{j,l=0}^{m}\alpha_{kl}\left(\dfrac{M^2\alpha^2}{1-f^{\circ j}(z_i)\overline{f^{\circ l}(z_k)}}-\dfrac{g(z_i)\overline{g(z_k)}}{1-f^{\circ j+1}(z_i)\overline{f^{\circ l+1}(z_k)}}\right)\overline{\alpha_{ij}}\\&
=\sum_{i,k=1}^{n} \langle \Delta(z_i,z_k)\bm{\alpha}_k,\boldsymbol{\alpha}_i\rangle,
\end{align*}
where $\boldsymbol{\alpha}_i=(\overline{\alpha_{i0}},\overline{\alpha_{i1}},\cdots,\overline{\alpha_{im}})^T$ for all $i=1,2,\cdots, n$ and for all $j,l=0,1,\cdots,m$.

To prove the converse, let us define $D^*:\bCG\to\bCG$ by 
\begin{align}\label{DefineD}
D^*(k_{f^{\circ n}}(z))=\overline{g(z)}k_{f^{\circ n+1}(z)}.    
\end{align}
As observed in the forward direction, the positivity condition on $\Delta$ is precisely the same as continuity of $\Delta$. Let us define the operator $A^*=M_z^*|_\bCG$. Therefore we have $A^*(k_{f^{\circ n}(z)})=\overline{f^{\circ n}(z)}k_{f^{\circ n }(z)}$. It is easy to check that $f(A)D=DA$. The isometry $M_z$ on $H^2$ is clearly a lift of $A$ and so by the Lifting theorem (Theorem \ref{T:LiftEndo}), there exists a bounded operator $B$ on $H^2$ such that 
$$
f(M_z)B=M_fB=BM_z \quad\mbox{and}\quad D^*=B^*\mid_{\bCG}.
$$ By Theorem \ref{T:f-commutant of vector-shift} $B=M_GC_f$ for some $G\in H^{\infty}(\bD)$. This together with \eqref{DefineD} gives $G(z)=g(z)$ for all $z\in \mathcal{S}.$   
\end{proof}

We now illustrate Theorem \ref{T:BoundExt} by considering several special cases.

\begin{example}\label{Ex:Pick}
It is possible to deduce from Theorem \ref{T:BoundExt} the classical Pick interpolation theorem, which states that given a finite subset $\bCS=\{z_1,z_2,\dots,z_N\}$ of $\bD$, a function $g:\bCS\to\bC$ has a bounded holomorphic extension to $\bD$ if and only if there is $M>0$ such that
$$
P = \begin{bmatrix}
\frac{M^2-g(z_i)\overline{g(z_k)}}{1-z_i\overline{z_k}}
\end{bmatrix}\succeq 0.
$$To deduce this fact from Theorem \ref{T:BoundExt} we consider any subset $\bCS$ of $\bD$ and the Blaschke function $f(z)=z$. In this case, $\bCO_f(z)=\{z\}$ for every $z\in\bD$ and so $S$ contains all the orbits of its points under $f$ as is required in Theorem \ref{T:BoundExt}. Moreover, since $\alpha$ as in the statement of Theorem \ref{T:BoundExt} is $1$ in this case, the operator-valued function $\Delta$ as in the statement of the Theorem \ref{T:BoundExt} turns out to be the scalar-valued function $\Delta:\bCS\times\bCS\to\bC$ given by
 \begin{align*} 
\Delta(z,w)=\frac{M^2-g(z)\overline{g(w)}}{1-z\overline{w}}.
\end{align*} Thus a function $g:\bCS\to\bC$ holomorphic or not, has a bounded holomorphic extension to $\bD$ if and only if the function $\Delta$ as above is a positive semi definite kernel. Note that $\Delta$ is exactly the same as the Pick matrix $P$ as above when $\bCS=\{z_1,z_2,\dots,z_N\}$. 
\end{example}

\begin{example}
For this example, let us take $\bCS$ to be a circular disk $\bD_r=\{z:|z|<r<1\}$ and the Blaschke function $f$ to be a rotation map $f(z)=qz$, $q\in\bT$. Then $\bCS$ contains all the orbits of its points under $f$. Theorem \ref{T:BoundExt} answers when a rotation invariant function $g$ on $\bCS$ (holomorphic or not) has a bounded holomorphic extension to the whole $\bD$. The answer depends on whether $q$ comes from a rational or an irrational point in $\bR$. Indeed, suppose $q$ is a primitive $N$-th root of unity. In this case, \textit{a function $g:\bD\to\bC$ with the rotation invariance property $g(qz)=g(z)$ has a bounded holomorphic extension to $\bD$ if and only if there exists $M>0$ such that the $N\times N$ matrix-valued function $\Delta:\bCS\times\bCS\to M_N(\bC)$ given by
\begin{align}\label{qPick}
\Delta(z,w) = \begin{bmatrix}
    \dfrac{M^2-g(z)\overline{g(w)}}{1-q^{l-j}z\overline w}
\end{bmatrix}_{j,l=1}^{N}    
\end{align}
is positive semi-definite.} It is interesting to note that, when $q$ is an $N$-th root of $1$, given any function $g:\bS\to\bC$ one can construct a $q$-invariant function $\hat g$ on $\bCS$ with $\|\hat g\|_\infty\leq \|g\|_\infty$ as follows:
\begin{align*}
    \hat g(z) = \frac{g(z)+g(qz)+\cdots+ g(q^{N-1}z)}{N}.
\end{align*}However, this observation is of limited utility, as an extension of $\hat g$ is not necessarily an extension of $g$. Consequently, the unimodular number $q$ cannot generally be omitted from the matrix \eqref{qPick}.

When $q=e^{2\pi i \theta}$ for an irrational $\theta$, the same conclusion as in the rational case above holds except that the matrix as in \eqref{qPick} will be an infinite matrix (i.e., an operator on $\ell^2$) because the orbit length $n_z$ for every $z\in \bCS$ is infinite.
\end{example}

We now present an application of Theorem \ref{T:f-commutant of vector-shift}.
\begin{theorem}\label{T:Bdd-holomorphic}
A given function $g:\bD\to\bC$ is bounded and holomorphic on $\bD$ if and only if there exists $M>0$ and a Blaschke function $f$ such that the function $\Delta_f:\bD\times \bD \to \bC$ defined by
$$
\Delta(z,w) =M^2\alpha^2k(z,w)-g(z)k(f(z),f(w))\overline{g(w)}
$$is a positive semi-definite kernel where $\alpha^2 = (1+|f(0)|)(1-|f(0)|)^{-1}$.
\end{theorem}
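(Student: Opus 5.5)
The plan mirrors the proof of Theorem \ref{T:BoundExt}, but works directly on all of $H^2$ rather than on a subspace, so that Theorem \ref{T:f-commutant of vector-shift} applies without any lifting.

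\emph{Forward direction.} Suppose $g$ is bounded and holomorphic. Take $f(z)=z$, so that $\alpha=1$, and $M=\|g\|_\infty$. Then $\Delta_f$ is the classical Pick kernel
\[
(M^2-g(z)\overline{g(w)})\,k(z,w),
\]
which is positive because $\|M_g\|\le M$. In fact any Blaschke $f$ works here. Put $A=M_gC_f$; its adjoint acts by $A^*k_w=\overline{g(w)}k_{f(w)}$. Since $\|C_f\|^2\le\alpha^2$, we get $\|A\|\le M\alpha$, and expanding
\[
M^2\alpha^2\Big\|\sum c_ik_{w_i}\Big\|^2-\Big\|A^*\sum c_ik_{w_i}\Big\|^2\ge0
\]
gives exactly the positivity of $\Delta_f$.

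\emph{Converse.} Given positivity of $\Delta_f$, define $D^*$ on the span of the kernels by $D^*k_w=\overline{g(w)}\,k_{f(w)}$. Positivity of $\Delta_f$ is equivalent to
\[
\Big\|\sum c_i\overline{g(w_i)}k_{f(w_i)}\Big\|^2\le M^2\alpha^2\Big\|\sum c_ik_{w_i}\Big\|^2 .
\]
This one inequality does three things at once: it shows $D^*$ is well defined (a vanishing combination is sent to zero), bounded by $M\alpha$, and extends to all of $H^2$, since kernels span a dense set.

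Next, verify the covariance relation $f(M_z)D=DM_z$ through its adjoint $D^*M_f^*=M_z^*D^*$ on kernels:
\[
D^*M_f^*k_w=\overline{f(w)}\,\overline{g(w)}\,k_{f(w)}=M_z^*D^*k_w,
\]
using $M_z^*k_u=\overline{u}k_u$. Theorem \ref{T:f-commutant of vector-shift}, with $\mathcal E=\bC$, then gives $D=M_\varphi C_f$ for some $\varphi\in H^\infty$. To identify $\varphi$, compute
\[
(M_\varphi C_f)^*k_w=C_f^*\big(\overline{\varphi(w)}k_w\big)=\overline{\varphi(w)}\,k_{f(w)}.
\]
Comparing with $D^*k_w=\overline{g(w)}k_{f(w)}$, and noting $k_{f(w)}\ne0$, yields $g(w)=\varphi(w)$ for every $w\in\bD$. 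Hence $g=\varphi$ is bounded and holomorphic.

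The only delicate step is the well-definedness and boundedness of $D^*$ on finite combinations of kernels, which is precisely the content of the kernel inequality. The rest is a direct application of Theorem \ref{T:f-commutant of vector-shift}, with no lifting needed because $D$ already lives on all of $H^2$.
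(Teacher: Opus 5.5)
Your proof is correct and follows essentially the same route as the paper: the forward direction uses $f(z)=z$ and the classical Pick kernel (your extra remark that any Blaschke $f$ works, via $A=M_gC_f$ and $\|C_f\|\le\alpha$, is also right), and the converse defines $D^*k_w=\overline{g(w)}\,k_{f(w)}$, verifies $f(M_z)D=DM_z$, and applies Theorem~\ref{T:f-commutant of vector-shift} to get $D=M_\varphi C_f$ with $\varphi=g$. You are more careful than the paper about the well-definedness of $D^*$ and the identification $\varphi=g$; the only cosmetic fix is to take $M\ge\|g\|_\infty$ with $M>0$, so that the case $g\equiv 0$ is covered.
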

 \begin{proof}
Let us first assume that $g$ is bounded holomorphic. Then for $f(z)=z$, the conclusion follows from the discussion in Example \ref{Ex:Pick}.  

To prove the converse, let us define a linear transformation $D^*$ on finite linear combinations of kernel functions by linearly extending the assignment:
\begin{align}\label{DefineD*}
D^*k_z=\overline{g(z)}k_{f(z)} \quad \mbox{for all }z\in \bD.
\end{align}
Then for $M>0$ and $x\in H^2$ with $x=\sum_{i=1}^{m}\alpha_{i}k_{z_i}$, we have 
\begin{align*}
 M^2\alpha^2\|x\|^2-\|D^*x\|^2
 =M^2\alpha^2\|\sum_{i=1}^{m}\alpha_{i}k_{z_i}\|^2-\|\sum_{i=1}^{m}\alpha_{i}\overline{g(z_i)}k_{f(z_i)}\|^2,
\end{align*}which, when written in terms of inner products, gives
\begin{align*}
&M^2\alpha^2\sum_{i,j=1}^{m}\alpha_{j}\overline{\alpha_{i}}\langle k_{z_j},k_{z_i}\rangle-\sum_{i,j=1}^{m}\alpha_{j}\overline{\alpha_{i}}g(z_i)\overline{g(z_j)}\langle k_{f(z_j)},k_{f (z_i)}\rangle\\
&=\sum_{i,j=1}^{m}\alpha_{j}\left(\dfrac{M^2\alpha^2}{1-z_i\overline{z_j}}-\dfrac{g(z_i)\overline{g(z_j)}}{1-f(z_i)\overline{f(z_j)}}\right)\overline{\alpha_{i}}.
\end{align*}
The positivity assumption on $\Delta$ makes $D$ a bounded operator on $H^2$ when extended the above assignment linearly and continuously. It is easy to check that we have the left covariance relation 
$$
f(M_z)D=DM_z.
$$ By Theorem \ref{T:f-commutant of vector-shift}, we must have $D=M_GC_f$ for some $G\in H^{\infty}(\bD)$ giving the desired result because $G(z)=g(z)$ for all $z\in \bD$.
\end{proof}

\medskip

\textbf{Acknowledgment.} This project was conceived while the first author visited the Indian Statistical Institute, Bangalore, and the second author visited the Indian Institute of Science, Bangalore, during the summer of 2026. The authors gratefully acknowledge the support and hospitality extended by both institutions.

\bibliographystyle{plain}
\bibliography{ref}
\end{document}